\newtheorem{theorem}{Theorem}[section]
\newtheorem{lemma}[theorem]{Lemma}
\newtheorem{conjecture}[theorem]{Conjecture}
\newtheorem{proposition}[theorem]{Proposition}
\newtheorem{definition}{Definition}[section]
\newtheorem{remark}{Remark}[section]
\newtheorem{example}[theorem]{Example}
\numberwithin{equation}{section}
\begin{document}

\title  {Translational absolute continuity and  Fourier frames on a sum of singular measures}


\author{Xiaoye Fu}

\address{ School of Mathematics and Statistics, The Central China Normal University, Wuhan, China}

 \email{xiaoyefu@mail.ccnu.edu.cn}

\author{Chun-Kit Lai}

\address{Department of Mathematics, San Francisco State University,
1600 Holloway Avenue, San Francisco, CA 94132.}

 \email{cklai@sfsu.edu}

\thanks{ The research is supported in part by the NSFC 11401205}

\subjclass[2010]{Primary 28A25, 42A85, 42B05.}
\keywords{Fourier frame, frame-spectral measures, packing pair, self-affine measures}

\begin{abstract}
A finite Borel measure $\mu$ in ${\mathbb R}^d$ is called a frame-spectral measure if it admits an exponential frame (or Fourier frame) for $L^2(\mu)$. It has been conjectured that a frame-spectral measure must be  translationally absolutely continuous, which is a criterion describing the local uniformity of a measure on its support. In this paper, we show that if any measures $\nu$ and $\lambda$  without atoms whose supports form a packing pair, then $\nu\ast \lambda +\delta_t\ast\nu$  is translationally singular and it does not admit any Fourier frame. In particular, we show that the sum of one-fourth and one-sixteenth Cantor measure $\mu_4+\mu_{16}$ does not admit any Fourier frame.
We also interpolate the mixed-type frame-spectral measures studied by Lev and the measure we studied. In doing so, we demonstrate a discontinuity behavior: For any anticlockwise rotation mapping $R_{\theta}$ with $\theta\ne \pm\pi/2$, the two-dimensional measure $\rho_{\theta} (\cdot): = (\mu_4\times\delta_0)(\cdot)+(\delta_0\times\mu_{16})(R_{\theta}^{-1}\cdot)$, supported on the union of $x$-axis and $y=(\cot \theta)x$,  always admit a Fourier frame. Furthermore, we can find $\{e^{2\pi i \langle\lambda,x\rangle}\}_{\lambda\in\Lambda_{\theta}}$ such that it forms a Fourier frame for $\rho_{\theta}$ with frame bounds independent of $\theta$. Nonetheless, $\rho_{\pm\pi/2}$ does not admit any Fourier frame.
\end{abstract}

\renewcommand{\baselinestretch}{1.0}
\maketitle

\vspace{-1.2cm}
\section{Introduction}

Let $\mu$ be a finite Borel measure on $\mathbb{R}^d$ and $\langle\cdot,\cdot\rangle$ denote the standard inner product. We say that $\mu$ is a \emph{frame-spectral measure} if there exists a countable set $\Lambda\subset\mathbb{R}^d$ such that a system of exponential functions $E(\Lambda): = \{e_{\lambda}(x): \lambda\in\Lambda\}$, where $e_{\lambda}(x)=e^{2\pi i\langle\lambda,x\rangle}$, forms a \emph{Fourier frame} for $L^2(\mu)$, i.e., there exist two constants $A,B$ such that $0 < A \le B<\infty$ with
\begin{eqnarray}\label{eqframe}
A\|f\|_{L^2(\mu)}^2 \le \sum\limits_{\lambda\in \Lambda} |\int f(x)e^{2\pi i \langle\lambda,x\rangle}d\mu(x)|^2 \le B\|f\|_{L^2(\mu)}^2
\end{eqnarray}
 for any $f\in L^2(\mu)$. Whenever such $\Lambda$ exists, $\Lambda$ is called a \emph{frame spectrum} for $\mu$. If only the upper bound holds, $E(\Lambda)$ is called \emph{a Bessel sequence} for $\mu$. We say that $\mu$ is a \emph{spectral measure} and $\Lambda$ is a \emph{spectrum} for $\mu$ if the system $E(\Lambda)$ forms an orthonormal basis for $L^2(\mu)$.

\vspace{0.3cm}

The notion of Fourier Frames is a natural generalization of an exponential orthonormal basis, which is a spanning set with possible redundancies in the representation. It was first introduced by Duffin and Schaeffer \cite{DS52} in the context of non-harmonic Fourier series. Frames nowadays have wide range of applications in signal transmission and reconstruction. The theory of Fourier frames have also been studied in complex analysis since it is equivalent to having a stable sampling in Paley-Wiener spaces \cite{OS02}. For more general background of frame theory, readers may refer to \cite{Chr}.

\vspace{0.3cm}

One of the basic questions in frame theory is to classify when a  measure $\mu$ is  spectral or frame-spectral. The origin of this question dates back to Fuglede \cite{Fug74} who initiated a study to determine when a measurable set $\Omega$ admits an exponential orthonormal basis for $L^2(\Omega)$ and proposed his famous spectral set conjecture. Although the conjecture was disproved in ${\mathbb R}^d, d\ge 3$ by Tao, Kolountzakis and Matolcsi \cite{T04, KM1, KM2}, the conjecture has led people to research extensively the property leading a set or a measure to be spectral or frame-spectral. Particularly, the study of singular spectral measures is possible, difficult but compelling  \cite{JP98,LW02, DJ07, S00}. One of the major properties of frame-spectral measures that has been observed in several earlier work (\cite{LW06,DHJ09}) and was rigorously formulated in \cite{DL14}, is a notion of ``translational uniformity".

\vspace{0.3cm}

Given a finite Borel measure $\mu$,  the {\it closed support} of $\mu$ is the smallest closed set $K$ such that $\mu$ has full measure (i.e. $\mu(K)  = \mu({\mathbb R}^d)$). We will denote it by $K_{\mu}$ throughout the paper. We say that a Borel set $X$ is a {\it Borel support} of $\mu$ if $X\subset K_{\mu}$ and $\mu(X) = \mu(K_{\mu})$. Note that the closed support of $\mu$ is unique, but Borel support is not unique.  We have the following definition, which refines the definition proposed by   Dutkay and the second-named author:

\begin{definition}
{\rm Let $\mu$ be a finite Borel measure and let $E$ be a set  such that $E\subset K_\mu$ and $\mu(E)>0$. We say that $\mu$ is {\it translationally absolutely continuous on $E$} if for any $t\in{\mathbb R}^d$, the measure}
$$
\omega_t(F): = \mu ((F+t)\cap (E+t))
$$
{\rm is absolutely continuous with respect to $\mu$. We say that $\mu$ is {\it translationally absolutely continuous} if there exists a Borel support $X$ of $\mu$ such that  it is translationally absolutely continuous on every set $E$  with $E\subset X$ and  $\mu(E)>0$. We will also say that $\mu$ is {\it translationally singular} if $\mu$ is not  translationally absolutely continuous.}
\end{definition}

The following are some remark concerning the property of translational absolute continuity.

\begin{remark}\label{remark}  {\rm From the definition of translational absolute continuity, we have:}
\begin{enumerate}[(1)]
\item {\rm In the definition, if $\mu(E+t) = 0$, then $\omega_t = 0$ and it is trivially absolutely continuous
 with respect to $\mu$. Therefore, it suffices to check those $t$ such that $\mu(E+t)>0$. }
\item {\rm Hausdorff measure with any gauge functions restricted on a set $K$ must be  translationally absolutely continuous, since it is translational invariant.}
\item {\rm It is possible that $\mu$ is not translationally absolutely continuous on $K_{\mu}$, but it is translationally absolutely continuous on some Borel support $X$ of $\mu$ (See Example \ref{example5}). Therefore, the definition above needs this finer notion of Borel support. }
\item {\rm Translational absolute continuity can be illustrated as some common language: On $E$ in Figure 1, we may imagine the length is measured by kilometers, while on $E+t$, the length is measured in miles. $\omega_t$ measures $F$ in $E$ using the translation $F+t$, which means that the length is now measured using miles in $E$. This action is clearly an absolutely continuous measurement.}
\begin{figure}[h]\label{fig}
  \includegraphics[width=8cm]{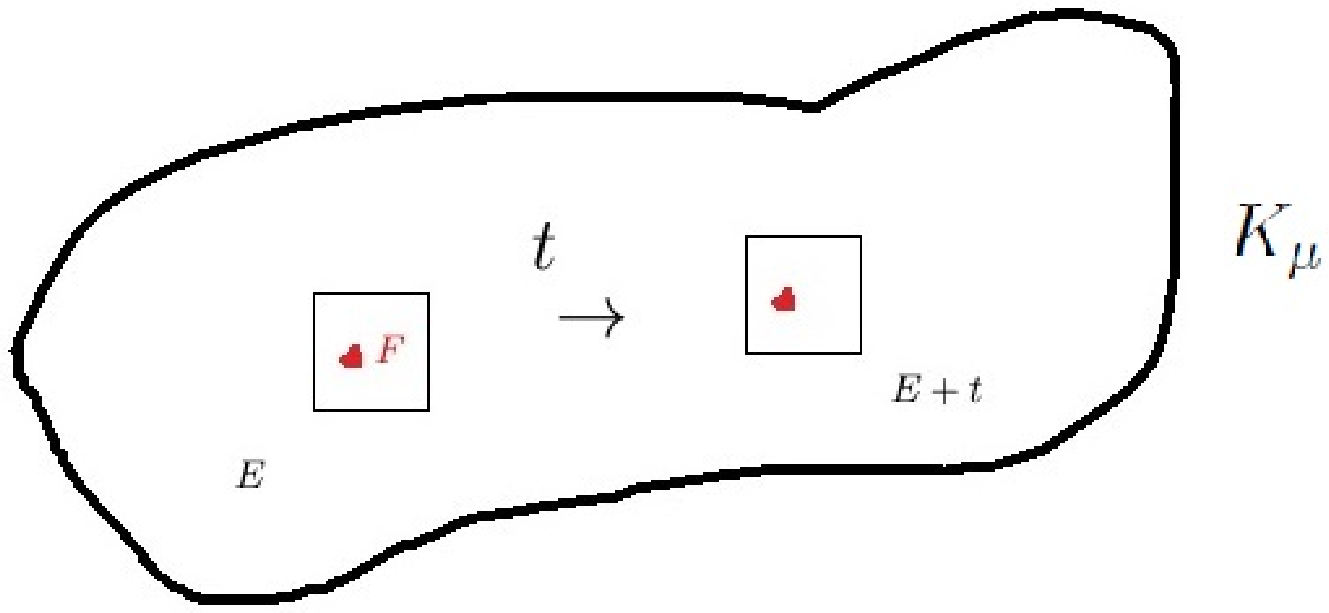}\\
  \caption{}
\end{figure}
\end{enumerate}
\end{remark}

\vspace{0.3cm}

The upper bound of following theorem was proved in \cite{DL14}.

\begin{theorem}\label{th1}
Let $\mu$ be a finite Borel measure which admits some Fourier frame $\{e_{\lambda}(x)\}_{ \lambda\in\Lambda}$ with frame bounds $0<A\le B<\infty$. Suppose that $\mu$ is translationally absolutely continuous on $E\subset K_\mu$ with $\mu(E)>0$. Then the essential supremum of the Radon-Nikodym derivative of $\omega_t$ with respect to $\mu$ on $E$, $\frac{d\omega_t}{d\mu}$, satisfies
\begin{equation}\label{eq1}
 \left\|\frac{d\omega_t}{d\mu}\right\|_{\infty}\le \frac{B}{A}.
\end{equation}
\end{theorem}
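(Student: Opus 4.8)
The plan is to transport the frame inequality from a function supported on $E$ to its translate supported on $E+t$, using the Radon--Nikodym derivative $h_t := \frac{d\omega_t}{d\mu}$ (which exists by the assumed translational absolute continuity on $E$) as the exact bookkeeping device recording the local distortion of $\mu$ under translation by $t$.

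First I would record the change-of-variables identity that $\omega_t\ll\mu$ provides. For $F\subset E$ one has $\omega_t(F)=\mu((F+t)\cap(E+t))=\mu((F\cap E)+t)=\mu(F+t)$, and since $d\omega_t=h_t\,d\mu$ this reads $\mu(F+t)=\int_F h_t\,d\mu$. Extending from indicators to bounded Borel functions by linearity and an approximation argument gives
\[
\int_{E+t}\phi(y-t)\,d\mu(y)=\int_E\phi(x)\,h_t(x)\,d\mu(x)
\]
for every bounded Borel $\phi$ supported on $E$. (Note $h_t=0$ $\mu$-a.e.\ off $E$, so all these integrals are genuinely concentrated on $E$.) Next comes the modulation step: fix $t$, fix a bounded $f\in L^2(\mu)$ supported on $E$, and set $g(y)=f(y-t)$ on $E+t$ and $g=0$ elsewhere. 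Applying the identity to $\phi(x)=f(x)e_\lambda(x)$ yields
\[
\int g(y)\,e_\lambda(y)\,d\mu(y)=e^{2\pi i\langle\lambda,t\rangle}\int_E f(x)\,h_t(x)\,e_\lambda(x)\,d\mu(x),
\]
so the two Fourier coefficients agree in modulus for every $\lambda$, and hence the two frame sums $\sum_\lambda|\cdot|^2$ coincide. Applying the identity once more with $\phi=|f|^2$ gives $\|g\|_{L^2(\mu)}^2=\int_E|f|^2h_t\,d\mu$, while directly $\|f h_t\|_{L^2(\mu)}^2=\int_E|f|^2h_t^2\,d\mu$.

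Finally I would feed these into (\ref{eqframe}) \emph{in the right order}: the lower bound applied to $f h_t$ together with the upper bound applied to $g$, combined with the equality of frame sums, produces
\[
A\int_E|f|^2h_t^2\,d\mu\le\sum_\lambda\Big|\int (fh_t)e_\lambda\,d\mu\Big|^2=\sum_\lambda\Big|\int g\,e_\lambda\,d\mu\Big|^2\le B\int_E|f|^2h_t\,d\mu,
\]
valid for all bounded $f$ supported on $E$. Reversing the two bounds would only give a vacuous inequality, so this ordering is the crux. Choosing $f=\chi_S$ with $S=\{x\in E:B/A<h_t(x)\le M\}$ and noting that $A h_t^2>B h_t$ pointwise on $S$ forces $A\int_S h_t^2>B\int_S h_t$ whenever $\mu(S)>0$, contradicting the display. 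Letting $M\to\infty$ gives $\mu(\{x\in E:h_t(x)>B/A\})=0$, i.e.\ $\big\|\tfrac{d\omega_t}{d\mu}\big\|_\infty\le B/A$.

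The main technical obstacle is integrability: a priori $h_t$ is only an $L^1(\mu)$ density, so neither $g$ nor $f h_t$ need lie in $L^2(\mu)$ for unbounded $f$ or unbounded $h_t$, and the frame inequalities cannot be invoked. This is exactly why I would work with the truncated test sets $S=\{B/A<h_t\le M\}$, on which $h_t$ is bounded and all functions involved are genuinely in $L^2(\mu)$; the parameter $M$ is removed only at the final step. A secondary point needing care is the justification of the change-of-variables identity and the measurability of $g$, both of which rest on the hypothesis $\omega_t\ll\mu$, that is, on translational absolute continuity on $E$.
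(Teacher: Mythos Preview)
The paper itself does not prove this theorem; the sentence preceding the statement attributes it to \cite{DL14}, so there is no in-paper argument to compare against. Your proof is correct: the identity $\int_{E+t}\phi(y-t)\,d\mu(y)=\int_E\phi\,h_t\,d\mu$ is precisely the content of $\omega_t\ll\mu$ with density $h_t$ (once one notes $(F+t)\cap(E+t)=(F\cap E)+t$, so $\omega_t$ is concentrated on $E$), the matching of the two frame sums via the unimodular factor $e^{2\pi i\langle\lambda,t\rangle}$ is right, and the truncation $S=\{B/A<h_t\le M\}$ cleanly handles the integrability issue so that both $fh_t$ and $g$ genuinely lie in $L^2(\mu)$ before the frame bounds are applied. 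This is the same argument that underlies the proof in \cite{DL14}.
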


This theorem suggests a frame-spectral measure, if  it is translationally absolutely continuous, must be uniform along the support under translation in the sense that the density cannot vary too much as in (\ref{eq1}). Using this result, Dutkay and Lai proved an absolutely continuous  measure (with respect to Lebesgue measure) must have density bounded above and away from zero on the support (it was proved earlier in \cite{Lai11} using Beurling density). Furthermore, if the measure admits a tight Fourier frame ($A= B$ in (\ref{eqframe})), then the density is a constant, generalizing fully a theorem in \cite{LW06}. They also proved that a self-similar measure under a measure-disjoint condition is translationally absolutely continuous, and if it is frame-spectral, then it must be of equal-weight on each map. This implies that the measure is essentially a Hausdorff measure of a certain gauge function. Because of its wide applicability and examples of measures with Fourier frames are just some Hausdorff measures, The following conjecture was therefore proposed:

\begin{conjecture}\label{conj1}
If $\mu$ is a frame-spectral measure, then $\mu$ must be translationally
absolutely continuous.
\end{conjecture}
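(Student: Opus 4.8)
The plan is to prove the contrapositive: I would try to show that every \emph{translationally singular} measure fails to be frame-spectral. Unwinding the definition, translational singularity of $\mu$ means that for each Borel support $X$ of $\mu$ there exist a set $E\subset X$ with $\mu(E)>0$ and a vector $t\in{\mathbb R}^d$ such that $\omega_t(F)=\mu((F+t)\cap(E+t))$ is not absolutely continuous with respect to $\mu$. By Lebesgue decomposition this yields a Borel set $F$ with $\mu(F)=0$ while $\omega_t(F)>0$; that is, translating by $t$ transports a positive amount of the mass of $\mu$ lying in $E+t$ onto the $\mu$-null set $F+t$. Morally this is the situation in which the Radon--Nikodym derivative appearing in Theorem~\ref{th1} is ``infinite,'' so one expects the lower frame bound $A$ to be forced to $0$ and hence no Fourier frame to exist.

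To turn this heuristic into a proof I would exploit the rigidity that the frame bounds are translation invariant: if $E(\Lambda)$ is a frame for $L^2(\mu)$ with bounds $A\le B$, then the same $\Lambda$ and the same bounds work for every translate $\mu_t$ of $\mu$, because translating the measure only multiplies each Fourier coefficient by the unimodular factor $e^{-2\pi i\langle\lambda,t\rangle}$. Working simultaneously with $\mu$ and $\mu_t$ on the overlap region $(E+t)\cap K_\mu$, I would construct a sequence of test functions $f_n$ supported near the singular part of $\omega_t$, normalized so that $\|f_n\|_{L^2(\mu)}$ stays bounded below, and try to show $\sum_{\lambda\in\Lambda}|\int f_n e_\lambda\,d\mu|^2\to 0$, contradicting the lower frame inequality. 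This is exactly the mechanism the paper carries out for $\nu\ast\lambda+\delta_t\ast\nu$: there the packing-pair hypothesis makes the singular overlap explicit and the convolution structure factorizes the Fourier transform, which is what renders the frame sum controllable.

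The hard part will be executing this last step \emph{without} any structural hypothesis on $\mu$. For a general translationally singular measure the singular part of $\omega_t$ can live on an extremely thin and irregular set, and the frame spectrum $\Lambda$ itself can be arbitrarily distributed, so there is no a priori way to force $\sum_{\lambda}|\int f_n e_\lambda\,d\mu|^2$ to be small while keeping $\|f_n\|_{L^2(\mu)}$ bounded below. Any attempt to reduce the general case to the convolution case by decomposition or approximation runs into the fact that arbitrary measures need not split as convolutions with a packing-pair geometry. It is precisely this gap---passing from the rigid, explicitly computable examples to an arbitrary measure---that keeps the statement at the level of a conjecture, and I would expect that resolving it requires a genuinely new quantitative link between the failure of absolute continuity of $\omega_t$ and the decay of Fourier frame sums.
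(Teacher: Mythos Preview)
The statement you are addressing is Conjecture~\ref{conj1}, not a theorem: the paper does not prove it, but rather offers partial evidence (Theorem~\ref{t-1-1}) by exhibiting a specific class of translationally singular measures that fail to be frame-spectral. So there is no ``paper's own proof'' to compare your proposal against.

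Your write-up correctly recognizes this. The contrapositive strategy you outline---using translation invariance of the frame bounds together with test functions concentrated on the singular part of $\omega_t$---is exactly the mechanism behind Theorem~\ref{th1} and Theorem~\ref{th2.1}, and you accurately identify the structural reason the argument goes through in the packing-pair case (the convolution factorization of the Fourier transform) but stalls in general. Your closing paragraph is the honest assessment: without additional hypotheses on $\mu$ there is currently no way to force the frame sum small while keeping the $L^2(\mu)$-norm bounded below, and this is precisely why the statement remains a conjecture. In short, your proposal is not a proof and does not claim to be one; as a discussion of the obstruction it is accurate and aligned with the paper's own stance.
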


\vspace{0.3cm}

We note that translational continuity does not imply  a strong condition that the measure is absolutely continuous with respect to a certain Hausdorff measure as in (2) of Remark \ref{remark}. Recently, Lev \cite{L16} proved

\vspace{0.3cm}

\begin{theorem}[\cite{L16}]\label{t-1-3}
Let $\mu$ and $\nu$ be positive and finite continuous measures (without atoms) on ${\mathbb R}^m$ and ${\mathbb R}^{d-m}$ respectively. Suppose that $\mu$ and $\nu$ are frame-spectral measures. Then the measure $\rho: = \mu\times \delta_{\bf 0^{d-m}} + \delta_{\bf 0^{m}}\times\nu$ is a frame-spectral measure on ${\mathbb R}^d$.
\end{theorem}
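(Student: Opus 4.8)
The plan is to realize $L^2(\rho)$ as an orthogonal sum and then build a frame for $\rho$ by ``lifting'' the given frames for $\mu$ and $\nu$ into the two coordinate blocks of ${\mathbb R}^d = {\mathbb R}^m\times{\mathbb R}^{d-m}$. Since $\mu$ and $\nu$ are continuous, the origin is a $\rho$-null point where the two pieces $\mu\times\delta_{\mathbf 0}$ and $\delta_{\mathbf 0}\times\nu$ meet, so the map $f\mapsto (f_1,f_2)$ with $f_1(x)=f(x,{\mathbf 0})$, $f_2(y)=f({\mathbf 0},y)$ is a unitary identification $L^2(\rho)\cong L^2(\mu)\oplus L^2(\nu)$, and $\|f\|_{L^2(\rho)}^2=\|f_1\|_{L^2(\mu)}^2+\|f_2\|_{L^2(\nu)}^2$. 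Writing a frequency as $\lambda=(\alpha,\beta)$, the frame coefficient factors as $\int f\,\overline{e_\lambda}\,d\rho = \widehat{f_1\mu}(\alpha)+\widehat{f_2\nu}(\beta)$, so the whole problem reduces to choosing a countable $\Lambda$ making $\sum_{(\alpha,\beta)\in\Lambda}|\widehat{f_1\mu}(\alpha)+\widehat{f_2\nu}(\beta)|^2$ comparable to $\|f_1\|^2+\|f_2\|^2$.

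Fix frame spectra $\Lambda_1$ for $\mu$ (bounds $A_1,B_1$) and $\Lambda_2$ for $\nu$ (bounds $A_2,B_2$). My ansatz is
\[ \Lambda \;=\; \{(\alpha,b(\alpha)):\alpha\in\Lambda_1\}\ \cup\ \{(a(\gamma),\gamma):\gamma\in\Lambda_2\}, \]
where the shift maps $b:\Lambda_1\to{\mathbb R}^{d-m}$ and $a:\Lambda_2\to{\mathbb R}^m$ are at my disposal. The crucial structural point is that $\widehat{f_1\mu}(\alpha)$ is independent of the second coordinate, so inserting the shifts $b(\alpha)$ cannot spoil the frame property of $\Lambda_1$ for the $f_1$-part, and symmetrically for $a$. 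The first block then contributes $S_1=\sum_{\alpha\in\Lambda_1}|\widehat{f_1\mu}(\alpha)+\widehat{f_2\nu}(b(\alpha))|^2$ and the second $S_2=\sum_{\gamma\in\Lambda_2}|\widehat{f_2\nu}(\gamma)+\widehat{f_1\mu}(a(\gamma))|^2$. The upper (Bessel) bound is the routine half: by $|x+y|^2\le 2|x|^2+2|y|^2$ it suffices that $\{b(\alpha)\}$ be a Bessel set for $\nu$ and $\{a(\gamma)\}$ a Bessel set for $\mu$, which I arrange by pushing the shifts off to infinity along a sparse, well-separated sequence, using that for a continuous measure $\sigma$ the average of $|\widehat\sigma|^2$ over large boxes tends to $0$ (Wiener's lemma for atomless measures), so the off-diagonal Gram entries stay summable.

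The main obstacle is the lower bound. Expanding, $S_1+S_2\ge A_1\|f_1\|^2+A_2\|f_2\|^2 + 2\,\mathrm{Re}\,(C_1+C_2)$ with cross terms $C_1=\sum_{\alpha}\widehat{f_1\mu}(\alpha)\,\overline{\widehat{f_2\nu}(b(\alpha))}$ and $C_2=\sum_\gamma \overline{\widehat{f_2\nu}(\gamma)}\,\widehat{f_1\mu}(a(\gamma))$, and one must prevent these from cancelling the positive diagonal. A naive Cauchy--Schwarz estimate (equivalently, bounding the off-diagonal block of the frame operator) is too weak: any infinite exponential system has Bessel bound at least the total mass of its measure, so this crude estimate never yields a gap below $\min(A_1,A_2)$. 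The resolution must exploit the continuity of $\mu$ and $\nu$ through \emph{decorrelation} rather than size: since $\nu$ is atomless, $\widehat{f_2\nu}$ has mean and mean-square average zero over large frequency boxes (the diagonal of $\nu\times\nu$ being null), so the values $\widehat{f_2\nu}(b(\alpha))$ become asymptotically uncorrelated with the fixed pattern $\widehat{f_1\mu}(\alpha)$ as the shifts spread out. The plan is therefore to select $b,a$ — greedily along a rapidly increasing sequence, or by a probabilistic choice controlled with a second-moment/Borel--Cantelli argument — so that $C_1,C_2$ are small relative to $A_1,A_2$, analyzed \emph{jointly} rather than block by block, since an $f_2$ that makes $C_1$ large is automatically captured by the diagonal of $S_2$. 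Making this control uniform over the infinite-dimensional space of test functions is the technical heart of the proof, is precisely where the atomless hypothesis is indispensable, and is in the spirit of the frame-combining technique of Kozma and Nitzan; it is the step I expect to demand the most care.
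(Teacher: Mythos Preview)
The paper does not supply a proof of this theorem; it is quoted from Lev \cite{L16} and invoked as a black box (notably in the proof of Theorem~\ref{t-3-3}). So there is no in-paper argument to compare against, and I comment on your proposal directly.

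Your reduction $L^2(\rho)\cong L^2(\mu)\oplus L^2(\nu)$, the ansatz $\Lambda=\{(\alpha_j,b_j)\}\cup\{(a_k,\gamma_k)\}$, and your identification of the lower bound as the crux are all correct and match Lev's framework. The gap is in the proposed resolution: choosing the shifts $b_j,a_k$ so that the cross terms $C_1,C_2$ become small by ``decorrelation'' cannot work. You already observed that any infinite exponential system in $L^2(\nu)$ has Bessel bound at least $\nu({\mathbb R}^{d-m})$; exactly the same obstruction pins down the cross operator. Concretely, if $\{e_{\alpha_j}\}$ happens to be an orthonormal basis of $L^2(\mu)$ (e.g.\ $\mu$ Lebesgue on a cube, $\alpha_j\in{\mathbb Z}^m$), then the off-diagonal block $X=\sum_j e_{\alpha_j}\otimes e_{b_j}:L^2(\nu)\to L^2(\mu)$ satisfies $\|Xg\|_\mu^2=\sum_j|\langle g,e_{b_j}\rangle_\nu|^2$, so $\|X\|^2$ \emph{equals} the Bessel bound of $\{e_{b_j}\}$ and hence $\|X\|^2\ge\nu({\mathbb R}^{d-m})$ for \emph{every} choice of shifts, however spread out or random. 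The ``joint'' bookkeeping you invoke is already the block-matrix analysis of the full frame operator, and that analysis still requires, in effect, $\|X+Y\|^2<A_1A_2$; with a single copy of each frame there is no mechanism to force this, and no evident cancellation between $X$ and $Y$.

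The missing idea --- and, as far as I can reconstruct, the one Lev actually uses --- is \emph{oversampling}. Since $\nu$ is atomless, Wiener's lemma lets one build an infinite sequence $\{b_\ell\}$ for which $\{e_{b_\ell}\}$ is Bessel in $L^2(\nu)$ with bound at most $\nu({\mathbb R}^{d-m})+\varepsilon$: choose the $b_\ell$ greedily so that the off-diagonal Gram entries $\widehat\nu(b_\ell-b_{\ell'})$ are summably small, then apply Schur's test. Now take $N$ copies of the frame spectrum $\Lambda_1$ and assign to each frequency a distinct shift drawn from this \emph{fixed} Bessel family; perform the symmetric construction for the $\nu$-block. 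Using $|x+y|^2\ge\tfrac12|x|^2-|y|^2$ on each block yields
\[
S_1+S_2\ \ge\ \Bigl(\tfrac{N}{2}A_1-\mu({\mathbb R}^m)-\varepsilon\Bigr)\|f_1\|_\mu^2+\Bigl(\tfrac{N}{2}A_2-\nu({\mathbb R}^{d-m})-\varepsilon\Bigr)\|f_2\|_\nu^2,
\]
because the diagonal grows like $N$ while the Bessel bounds of the shift families stay fixed. For $N$ large this is a genuine lower frame bound, and the upper bound is routine. The atomless hypothesis enters exactly here: a single atom in $\nu$ would force every infinite exponential system in $L^2(\nu)$ to have infinite Bessel bound, and the estimate collapses.
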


This theorem implies that a frame-spectral measure can have component of different Hausdorff dimensions, so there is no absolute continuity with respect to any Hausdorff measure.  Nonetheless, since the subspace ${\mathbb R}^m\times\{\bf 0^{d-m}\}$ and $\{\bf 0^m\}\times{\mathbb R}^{d-m}$ intersects only at the origin, the only set   from the support of  $\mu\times \delta_{\bf 0^{d-m}}$ such that it can be translated into  the support  of $\delta_{\bf 0^{d-m}}\times \nu$ is just a singleton. But these measures have no atoms. Thus, $\rho$ is still translationally absolutely continuous (provided that $\mu$ and $\nu$ is) and conjecture \ref{conj1} is not violated.

\vspace{0.3cm}

On the other hand, we can construct easily measures that are not translationally absolutely continuous using some known examples and their sum. Our result in this paper is to show that these measures are not frame-spectral, suggesting a positive evidence towards Conjecture \ref{conj1} and complementing the result of Lev about sum of two frame-spectral measures.

\vspace{0.3cm}

 Recall that Jorgensen and Pedersen \cite{JP98}, who discovered the first  singular fractal spectral measures, found that the  standard Cantor measure with contraction ratio $1/2n$, $n\in{\mathbb N}$, is  a spectral measure, while the standard Cantor measure with contraction ratio $1/(2n+1)$ is not. For further investigation of spectral measures associated to different iterated function systems, one can refer to \cite{D12, DHL13, DHL14, DHL}.  If we let $\mu_4$ be the standard one-fourth Cantor measure with the digit set $\{0,1\}$ and $\mu_{16}$ be the standard one-sixteenth Cantor measure with the digit set $\{0,1\}$. i.e. they are the unique self-similar measure satisfying
$$
\mu_4(E) = \frac{1}{2}\mu_4( 4E)+\frac{1}{2}\mu_4(4E-1), \
\mu_{16}(E) = \frac{1}{2}\mu_{16}( 16E)+\frac{1}{2}\mu_{16}(16E-1).
$$
It is well-known that the supports of $\mu_4$ and $\mu_{16}$ are respectively
$$
K_{\mu_4}= \left\{\sum_{k=1}^{\infty}\frac{\epsilon_j}{4^j}: \epsilon_j\in\{0,1\}\right\}, \ K_{\mu_{16}} = \left\{\sum_{k=1}^{\infty}\frac{\epsilon_j}{(16)^j}: \epsilon_j\in\{0,1\}\right\}.
$$
Let $x\in{\mathbb R}$ and define
$$
\rho = \rho_x  = \mu_4+\delta_x\ast\mu_{16},
$$
where $\delta_x\ast \mu (E): = \mu(E-x)$. This measure is supported on $K_{\mu_4} \cup (x+K_{\mu_{16}})$ and we now claim that it is translationally singular if $K_{\mu_4}\cap (x+K_{\mu_{16}}) = \emptyset$ (this assumption will be removed later). Indeed, we note that $K_{\mu_{16}}\subset K_{\mu_4}$. We take $E = K_{\mu_4}$, then $\rho(E) \ge \mu_4(E) = 1>0$, we consider $F = K_{\mu_{16}}$ and take $t=x$, then
 $$
\rho(F) = \mu_4(F)= 0, \ \mbox{but} \ \omega_t(F) = \rho(K_{16}+t) =\mu_{16}(F) = 1.
 $$
By the standard definition of absolute continuity \cite{Rud87}, $\omega_t$ is singular with respect to $\rho$ and thus $\rho$ is translationally singular on $E$.

\medskip

We will introduce a notion of packing pair (See definition is Section 2) for any two compact sets (with Lebesgue measure equals zero probably) and explore many examples. Our main result is the following theorem towards the relations of packing pair with frame-spectrality and translational singularity:

\begin{theorem}\label{t-1-1}
Let $(K_1,K_2)$ be a packing pair and $\nu,\lambda$ be two continuous measures without atoms supported on $K_1,K_2$ respectively. Let also $\mu = \nu\ast\lambda$. Then for any $t\in{\mathbb R}^d$, the measure
$$
\rho_t = \mu+\delta_t\ast\nu
$$
is translational singular and does not admit any Fourier frame.
\end{theorem}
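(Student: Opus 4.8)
The plan is to build everything on one structural consequence of the packing hypothesis, which I will call the \emph{slice lemma}: for every $c\in{\mathbb R}^d$ one has $\mu(K_1+c)=0$, i.e.\ every single translate of $K_1$ is $\mu$-null. To prove it I would write $\mu=\nu\ast\lambda$ and unfold the convolution by Fubini,
\[ \mu(K_1+c)=\int_{K_1}\int_{K_2}\mathbf 1_{K_1}(a+b-c)\,d\lambda(b)\,d\nu(a)=\int_{K_1}\lambda\bigl(K_2\cap(K_1+c-a)\bigr)\,d\nu(a). \]
The defining feature of a packing pair is that a point of $K_1+K_2$ has a unique expression $k_1+k_2$ with $k_i\in K_i$; hence for every $\beta$ the set $K_2\cap(K_1+\beta)$ is at most a single point (if $b,b'$ both lie in it then $b-\beta,\,b'-\beta\in K_1$, and the identity $(b-\beta)+b'=(b'-\beta)+b$ exhibits two representations of one point, forcing $b=b'$). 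Since $\lambda$ has no atoms the inner integrand vanishes identically, so $\mu(K_1+c)=0$. Two facts follow at once: taking $c=t$, the measure $\delta_t\ast\nu$ (which lives on $t+K_1$) is carried by a $\mu$-null set, so $\delta_t\ast\nu\perp\mu$; and taking $c=b$ with $b\in K_2$, each slice $K_1+b$ is $\mu$-null.

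For translational singularity I would fix an \emph{arbitrary} Borel support $X$ of $\rho_t$ and show that $\rho_t$ fails to be translationally absolutely continuous on $E=X$. Using $\delta_t\ast\nu\perp\mu$, split $X=X_\mu\sqcup X_\nu$ with $\mu(X_\nu)=0$ and $(\delta_t\ast\nu)(X_\mu)=0$. Because the injective sum map identifies $\mu$ with $\nu\times\lambda$, Fubini gives, for $\lambda$-a.e.\ $b\in K_2$, that the slice $\{a\in K_1:a+b\in X_\mu\}$ is $\nu$-conull; fix such a $b$ and set $G=X_\mu\cap(K_1+b)$ and $s=t-b$. Then $\rho_t(G)=0$ (the slice lemma kills the $\mu$-part, and $G\subset X_\mu$ kills the $\delta_t\ast\nu$-part), while $G+s\subset t+K_1$ and $(\delta_t\ast\nu)(G+s)=\nu\bigl(\{a\in K_1:a+b\in X_\mu\}\bigr)>0$. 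Since $G\subset E$, the measure $\omega_s(F)=\rho_t\bigl((F\cap E)+s\bigr)$ satisfies $\omega_s(G)=\rho_t(G+s)>0$ although $\rho_t(G)=0$, so $\omega_s$ is not absolutely continuous with respect to $\rho_t$. As $X$ was an arbitrary Borel support, $\rho_t$ is translationally singular.

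For the absence of a Fourier frame, suppose $E(\Lambda)=\{e_\xi:\xi\in\Lambda\}$ is a frame for $L^2(\rho_t)$ with bounds $0<A\le B<\infty$, and normalize $\nu,\lambda$ to be probability measures. Two one-sided inequalities survive the passage to the pieces. First, since $\delta_t\ast\nu\le\rho_t$, a Bessel bound for the larger measure descends to the smaller one (compare $\langle F,e_\xi\rangle$ through the density $d(\delta_t\ast\nu)/d\rho_t\le1$); applied to the constant function this yields $\sum_{\xi\in\Lambda}|\widehat{\nu}(\xi)|^2\le B$. Second, because $\delta_t\ast\nu\perp\mu$, every $F\in L^2(\mu)$, extended by $0$, is supported on $X_\mu$ and satisfies $\langle F,e_\xi\rangle_{\rho_t}=\langle F,e_\xi\rangle_{\mu}$ and $\|F\|_{\rho_t}=\|F\|_\mu$, so the lower frame bound is inherited: $A\|F\|_\mu^2\le\sum_{\xi\in\Lambda}|\widehat{F\mu}(\xi)|^2$.

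The contradiction comes from testing the lower bound on functions that are flat in the $K_1$-variable and supported on a thin band in the $K_2$-variable. For a Borel set $N\subset K_2$ with $\lambda(N)>0$, put $F=\mathbf 1_{K_1+N}$. Packing gives $\|F\|_\mu^2=\mu(K_1+N)=\lambda(N)$ and the factorization $\widehat{F\mu}(\xi)=\widehat{\nu}(\xi)\,\widehat{\lambda|_N}(\xi)$. Using $|\widehat{\lambda|_N}(\xi)|\le\lambda(N)$ together with the Bessel bound for $\nu$,
\[ A\,\lambda(N)\le\sum_{\xi\in\Lambda}|\widehat{\nu}(\xi)|^2\,|\widehat{\lambda|_N}(\xi)|^2\le\lambda(N)^2\sum_{\xi\in\Lambda}|\widehat{\nu}(\xi)|^2\le B\,\lambda(N)^2, \]
so $A\le B\,\lambda(N)$. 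Since $\lambda$ has no atoms, $N$ may be chosen with $\lambda(N)$ arbitrarily small, forcing $A\le0$, a contradiction; hence $\rho_t$ admits no Fourier frame. The heart of the argument, and the step I expect to require the most care, is this last scaling estimate: it encodes precisely the obstruction that a single $\Lambda$ cannot simultaneously be a frame spectrum for $\nu$ and for the smeared measure $\nu\ast\lambda$, and it is where the atomlessness of $\lambda$ is indispensable (an atom would put a positive floor under $\lambda(N)$ and destroy the contradiction). Verifying the two frame-inheritance principles and the $\nu\times\lambda$ identification underlying the slice lemma are the routine but necessary technical points.
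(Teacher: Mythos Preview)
Your proof is correct and follows essentially the same architecture as the paper's: prove a slice lemma $\mu(K_1+c)=0$, use it to exhibit a translational singularity, then test the would-be frame inequality on indicators $\mathbf 1_{K_1+N}$ and exploit the factorization $\widehat{\mathbf 1_{K_1+N}\,\mu}=\widehat{\nu}\cdot\widehat{\lambda|_N}$ together with the atomlessness of $\lambda$ to force $A\le B\,\lambda(N)\to 0$.

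Where your argument differs is in the supporting lemmas, and your versions are a bit more economical. For the slice lemma you observe directly that $K_2\cap(K_1+\beta)$ is at most a singleton (so $\lambda$-null), whereas the paper argues by contradiction via an uncountable family of disjoint positive-measure translates (its Lemma~2.3 and Lemma~2.4). For translational singularity you pass immediately to an arbitrary Borel support $X$ and use the mutual singularity $\mu\perp\delta_t\ast\nu$ to split it; the paper first treats the closed support, invoking an auxiliary lemma to find $x\in K_\lambda$ with $\nu\bigl((K_\nu+t)\cap(K_\nu+x)\bigr)=0$, and then appeals to a separate lemma to transfer the conclusion to general Borel supports. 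For the frame-inheritance step you descend the Bessel bound to $\delta_t\ast\nu$ via the density $d(\delta_t\ast\nu)/d\rho_t\le 1$ and the lower bound to $\mu$ via singularity; the paper packages both directions into a single lemma (its Lemma~2.1(2)) using the hypothesis $\mu(K_\nu+t)=0$. None of these differences is substantive---your route simply avoids a couple of intermediate lemmas.
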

%
%

%
%
%
%
%
%
%
%
\vspace{0.3cm}

 Note that $\mu_4 = \mu_{16}\ast\lambda_{16}$ for some measure $\lambda_{16}$ with $K_{\lambda_{16}}$ and $K_{\mu_{16}}$ forms a packing pair (see Proposition \ref{prop3.5}), the above theorem tells us that $\rho_0 = \mu_4+\mu_{16}$ is not frame spectral.  However, using the result of Lev (Theorem \ref{t-1-3}), we know that if we place $\mu_4$ and $\mu_{16}$ in ${\mathbb R}^2$ in the way that
\begin{equation}\label{eq1.2}
\rho = \mu_4\times\{0\} +\{0\}\times\mu_{16},
\end{equation}
$\rho$ is a frame-spectral measure on ${\mathbb R}^2$. In the following, we will interpolate the measure in (\ref{eq1.2}) and $\mu_4+\mu_{16}$ through a rotation.

\vspace{0.3cm}

Let $\mu$ and $\nu$ be defined as in Theorem \ref{t-1-3}.  Let $V$ be a subspace of dimension $d$ and let $T: {\mathbb R}^d \rightarrow {\mathbb R}^d$ be an invertible linear transformation such that
\begin{eqnarray*}
T(\{{\bf 0}^m\} \times {\mathbb R}^{d-m}) = V.
\end{eqnarray*}
Define $\nu_T(E) = (\delta_{\bf 0^{m}}\times \nu)(T^{-1}(E))$ for any Borel set $E\subset{\mathbb R}^d$. This measure will be supported inside $V$. We show that

\begin{theorem}\label{t-1-4}
Let $\mu$ and $\nu$ be defined as in Theorem \ref{t-1-3}. Suppose that $V\cap ({\mathbb R}^m\times \{{\bf 0}^{d-m}\}) = \{{\bf 0}^d\}$. Then $\rho_T: = \mu\times\{{\bf 0}^{d-m}\} + \nu_T$ is a frame-spectral measure. Moreover, there exists $0<A\le B<\infty$ such that for any invertible linear transformation $T$ satisfying $V\cap ({\mathbb R}^m\times \{{\bf 0}^{d-m}\}) = \{{\bf 0}^d\}$, there exists a frame spectrum $\Lambda_T$ such that $E(\Lambda_T)$ is a Fourier frame for $\rho_T$ with frame bounds $A,B$.
\end{theorem}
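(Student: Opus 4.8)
The plan is to reduce $\rho_T$, for \emph{every} admissible $T$, to the single fixed measure $\rho=\mu\times\delta_{{\bf 0}^{d-m}}+\delta_{{\bf 0}^{m}}\times\nu$ of Theorem \ref{t-1-3} by one invertible linear change of variables, and to use that a Fourier frame, together with its frame bounds, is an exact invariant under such a change of variables. Once this is achieved, the frame-spectrality of $\rho_T$ is immediate from Theorem \ref{t-1-3} applied to $\rho$, and the uniform constants $A,B$ are simply the frame bounds that Theorem \ref{t-1-3} produces for the fixed measure $\rho$: they never see $T$.

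First I would record the invariance lemma. For $S\in GL(d,\mathbb{R})$ and a finite measure $\sigma$, put $S_*\sigma(E)=\sigma(S^{-1}E)$. Since $\int h\,d(S_*\sigma)=\int (h\circ S)\,d\sigma$ and $e_\lambda(Sx)=e^{2\pi i\langle S^*\lambda,x\rangle}=e_{S^*\lambda}(x)$, setting $g=h\circ S$ gives $\int h\,\overline{e_\lambda}\,d(S_*\sigma)=\int g\,\overline{e_{S^*\lambda}}\,d\sigma$ together with $\|h\|_{L^2(S_*\sigma)}=\|g\|_{L^2(\sigma)}$. Hence $E(\Lambda)$ is a Fourier frame for $L^2(S_*\sigma)$ with bounds $A,B$ if and only if $E(S^*\Lambda)$ is a Fourier frame for $L^2(\sigma)$ with the \emph{same} bounds $A,B$.

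Next I would build the change of variables from the transversality hypothesis. Because $V=T(\{{\bf 0}^m\}\times\mathbb{R}^{d-m})$ has dimension $d-m$ while $\mathbb{R}^m\times\{{\bf 0}^{d-m}\}$ has dimension $m$, and the two meet only at ${\bf 0}^d$, the dimensions add to $d$ and the intersection is trivial, so $\mathbb{R}^d=(\mathbb{R}^m\times\{{\bf 0}^{d-m}\})\oplus V$. Let $P$ be the projection onto $\mathbb{R}^m\times\{{\bf 0}^{d-m}\}$ along $V$, let $Q=I-P$, and let $\psi\colon V\to\{{\bf 0}^m\}\times\mathbb{R}^{d-m}$ be the inverse of the isomorphism $T|_{\{{\bf 0}^m\}\times\mathbb{R}^{d-m}}$. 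Define $W_T x=Px+\psi(Qx)$. Then $W_T\in GL(d,\mathbb{R})$, it is the identity on $\mathbb{R}^m\times\{{\bf 0}^{d-m}\}$, and it carries $V$ onto $\{{\bf 0}^m\}\times\mathbb{R}^{d-m}$ via $\psi$. Consequently $(W_T)_*(\mu\times\delta_{{\bf 0}^{d-m}})=\mu\times\delta_{{\bf 0}^{d-m}}$, and since $\nu_T$ is the image of $\delta_{{\bf 0}^{m}}\times\nu$ under $T|_{\{{\bf 0}^m\}\times\mathbb{R}^{d-m}}$, we get $(W_T)_*\nu_T=\psi_*\nu_T=\delta_{{\bf 0}^{m}}\times\nu$. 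Therefore $(W_T)_*\rho_T=\rho$ for every admissible $T$.

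Finally I would conclude. By Theorem \ref{t-1-3}, $\rho$ admits a frame spectrum $\Lambda_0$ with bounds $0<A\le B<\infty$ fixed once and for all. Applying the invariance lemma with $\sigma=\rho_T$ and $S=W_T$ (so that $S_*\sigma=\rho$), the set $\Lambda_T:=W_T^*\Lambda_0$ is a frame spectrum for $\rho_T$ with the same bounds $A,B$; this proves both the frame-spectrality and, since $A,B$ are those of the single measure $\rho$, the uniformity. I expect the only genuinely delicate point to be the construction of $W_T$: one needs a \emph{single} linear map that simultaneously fixes the $\mu$-component and normalizes the $\nu$-component to the standard copy, which is possible precisely because transversality furnishes the direct-sum splitting. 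This is also exactly where the discontinuity resides: at the degenerate configurations (the angles $\theta=\pm\pi/2$ in the planar example) the splitting collapses, $\rho_T$ degenerates to a sum carried on a single subspace of the type excluded by Theorem \ref{t-1-1}, and then neither such a $W_T$ nor any Fourier frame exists.
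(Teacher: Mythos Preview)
Your proof is correct and follows the same overarching strategy as the paper's: reduce $\rho_T$ to a measure in Lev's standard form via an invertible linear change of variables, then invoke Theorem \ref{t-1-3}. The execution, however, differs in a way worth recording. The paper writes $T$ in block form, uses that the transversality hypothesis is equivalent to the lower-right block $A_4$ being invertible, and applies the shear $(x,y)\mapsto(x+A_2A_4^{-1}y,\,y)$. This lands on $\mu\times\delta_{{\bf 0}^{d-m}}+\delta_{{\bf 0}^{m}}\times(\nu A_4^{-1})$, a measure that still depends on $T$; the uniformity of the frame bounds then requires an extra observation, namely that $(A_4^{t})^{-1}\Lambda_\nu$ is a frame spectrum for $\nu A_4^{-1}$ with the \emph{same} bounds as $\Lambda_\nu$ has for $\nu$, together with the fact that Lev's construction depends only on those component bounds. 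Your $W_T$, built from the oblique projection along $V$ composed with the inverse of $T|_{\{{\bf 0}^m\}\times\mathbb{R}^{d-m}}$, pushes every $\rho_T$ forward to the single fixed measure $\rho=\mu\times\delta_{{\bf 0}^{d-m}}+\delta_{{\bf 0}^{m}}\times\nu$; the uniformity of $A,B$ is then immediate, with no appeal to the internal structure of Lev's proof. Your route is thus slightly more economical on the uniformity side, while the paper's block-matrix computation has the minor advantage of making the condition $V\cap(\mathbb{R}^m\times\{{\bf 0}^{d-m}\})=\{{\bf 0}^d\}$ concrete as $\det A_4\neq 0$.
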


\vspace{0.3cm}

Combining Theorem \ref{t-1-4} and Theorem \ref{t-1-1}, an interesting discontinuity phenomenon is observed.

\begin{theorem}
Let $R_{\theta}$ be a rotation matrix on ${\mathbb R}^2$ by an angle $\theta$. The measure
$$
\rho(E) = (\mu_4\times \{0\})(E)+  (\{0\}\times \mu_{16}) (R_{\theta}^{-1}(E))
$$
are frame-spectral measures as long as $\theta \ne \pm\pi/2$. When $\theta =\pm\pi/2$, Theorem \ref{t-1-1} shows that they are not frame-spectral. Furthermore, if $\theta\ne \pm\pi/2$, we can always find some $E(\Lambda_{\theta})$ forming a Fourier frame with certain fixed frame bounds.
\end{theorem}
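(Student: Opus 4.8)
The plan is to split on the two regimes $\theta\ne\pm\pi/2$ and $\theta=\pm\pi/2$, realizing $\rho$ in each case as an instance covered by one of the two theorems already established. First, for $\theta\ne\pm\pi/2$ I would recognize $\rho$ as the measure $\rho_T$ of Theorem \ref{t-1-4} with $d=2$, $m=1$, $\mu=\mu_4$ and $\nu=\mu_{16}$ on ${\mathbb R}$, and $T=R_\theta$. Both $\mu_4$ and $\mu_{16}$ are continuous non-atomic measures that are spectral (Cantor measures with even contraction reciprocals $4=2\cdot2$ and $16=2\cdot8$, by Jorgensen--Pedersen), hence frame-spectral, so the hypotheses of Theorem \ref{t-1-4} hold. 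With $V=R_\theta(\{0\}\times{\mathbb R})$ the rotated $y$-axis, $\nu_T(E)=(\{0\}\times\mu_{16})(R_\theta^{-1}E)$ is exactly the second summand of $\rho$, so $\rho_T=\mu_4\times\delta_0+\nu_T=\rho$.

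It then remains only to verify the geometric hypothesis $V\cap({\mathbb R}\times\{0\})=\{{\bf 0}\}$. The direction vector of $V$ is $R_\theta(0,1)=(-\sin\theta,\cos\theta)$, which is parallel to the $x$-axis precisely when $\cos\theta=0$, i.e. $\theta=\pm\pi/2$. Thus for $\theta\ne\pm\pi/2$ the lines $V$ and the $x$-axis are distinct lines through the origin and meet only at ${\bf 0}$, so Theorem \ref{t-1-4} applies and $\rho$ is frame-spectral. Moreover, each $R_\theta$ with $\theta\ne\pm\pi/2$ lies in the family of invertible linear maps satisfying this condition, so the uniform constants $0<A\le B<\infty$ furnished by the ``moreover'' part of Theorem \ref{t-1-4} yield a Fourier frame $E(\Lambda_\theta)$ for every such $\theta$ with the same frame bounds, which is the asserted fixed-bound statement.

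For $\theta=\pm\pi/2$ I would reduce to Theorem \ref{t-1-1}. When $\cos\theta=0$ the rotated $y$-axis is the $x$-axis, so both summands of $\rho$ are carried by the $x$-axis: $\mu_4\times\delta_0$ on $K_{\mu_4}\times\{0\}$ and the rotated piece on $(\mp K_{\mu_{16}})\times\{0\}$. Identifying the $x$-axis with ${\mathbb R}$ via $(x,0)\mapsto x$, the restriction of $e_\lambda$ to a line $\ell=\{a+tv\}$ equals, up to a unimodular constant, the one-dimensional exponential $e_{\langle\lambda,v\rangle}$, so frame-spectrality of a planar measure supported on a line is equivalent to frame-spectrality of its one-dimensional image. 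This image is $\tilde\rho=\mu_4+\sigma$, where $\sigma$ is the reflection $s\mapsto-s$ of $\mu_{16}$. Since $\mu_{16}$ has equal weights $\tfrac12,\tfrac12$ it is symmetric about its center $c=\tfrac12\sum_j 16^{-j}=\tfrac1{30}$, so the reflection is merely a translate, $\sigma=\delta_{-1/15}\ast\mu_{16}$ (and $\sigma=\mu_{16}$ in the case $\theta=-\pi/2$). Hence $\tilde\rho=\mu_4+\delta_t\ast\mu_{16}$ with $t\in\{0,-1/15\}$. By Proposition \ref{prop3.5}, $\mu_4=\mu_{16}\ast\lambda_{16}$ with $(K_{\mu_{16}},K_{\lambda_{16}})$ a packing pair, so $\tilde\rho$ is exactly the measure $\rho_t=\nu\ast\lambda+\delta_t\ast\nu$ of Theorem \ref{t-1-1} with $\nu=\mu_{16}$, $\lambda=\lambda_{16}$; therefore $\tilde\rho$ admits no Fourier frame, and by the line-to-${\mathbb R}$ equivalence neither does $\rho$.

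The genuinely routine steps are the identification of $\rho$ with $\rho_T$ and the computation of the direction of $V$. The one point deserving care --- the only place the argument is not a direct quotation of the two theorems --- is the equivalence between frame-spectrality of a planar measure on a line and that of its one-dimensional image: distinct $\lambda\in\Lambda$ may collapse to equal frequencies $\langle\lambda,v\rangle$, but a Bessel bound forces each frequency to occur with finite multiplicity, so that deleting repetitions preserves both frame bounds up to constants and produces a genuine exponential frame indexed by a set, which is precisely what the translational-singularity obstruction behind Theorem \ref{t-1-1} excludes. I would record at the outset that rotation by $R_\theta$, the line-to-${\mathbb R}$ identification, and the realization of a reflection as a translate are all instances of the invariance of the frame property (with identical bounds) under invertible affine changes of variable, which legitimizes each reduction above.
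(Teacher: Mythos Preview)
Your treatment of the case $\theta\ne\pm\pi/2$ is exactly right and is precisely what the paper intends: the theorem is stated there as an immediate corollary of Theorems \ref{t-1-1} and \ref{t-1-4}, with no separate proof, and your identification $\rho=\rho_T$ together with the computation that $V=R_\theta(\{0\}\times\mathbb{R})$ meets the $x$-axis only at the origin iff $\cos\theta\ne 0$ is the whole argument.

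For $\theta=\pm\pi/2$ your overall route is fine, but the passage through one dimension contains a small gap. The assertion that ``a Bessel bound forces each frequency to occur with finite multiplicity, so that deleting repetitions preserves both frame bounds up to constants'' is not justified: finite multiplicity at each frequency does not give a \emph{uniform} bound on the multiplicities, and without that the lower frame bound can be destroyed when you pass from the multiset to the underlying set. There are two clean ways to close this.

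\emph{First, and simplest}, you do not need the one-dimensional reduction at all: Theorem \ref{t-1-1} is stated on $\mathbb{R}^d$. Set $\nu=\mu_{16}\times\delta_0$ and $\lambda=\lambda_{16}\times\delta_0$ on $\mathbb{R}^2$; both are atomless, $(K_\nu-K_\nu)\cap(K_\lambda-K_\lambda)=\big((K_{\mu_{16}}-K_{\mu_{16}})\cap(K_{\lambda_{16}}-K_{\lambda_{16}})\big)\times\{0\}=\{(0,0)\}$, and $\nu\ast\lambda=\mu_4\times\delta_0$. Your own computation shows that for $\theta=\pm\pi/2$ one has $\rho=(\mu_4\times\delta_0)+\delta_{(t,0)}\ast(\mu_{16}\times\delta_0)$ with $t\in\{0,-1/15\}$, so Theorem \ref{t-1-1} applies directly in $\mathbb{R}^2$.

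\emph{Second}, if you prefer to keep the reduction, use that Theorem \ref{th2.1} (the form of Theorem \ref{t-1-1} actually proved) excludes every \emph{frame measure}, not only Fourier frames indexed by sets. The projection of a frame $\Lambda\subset\mathbb{R}^2$ to the $x$-axis yields the frame measure $\Gamma=\sum_{\lambda\in\Lambda}\delta_{\lambda_1}$ for $\tilde\rho$, with the same bounds; this already contradicts Theorem \ref{th2.1}, and no deletion of repeated frequencies is required.
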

\vspace{0.3cm}

In the end of the introduction, we remark that singular measures with Fourier frames but not an exponential basis were studied in  \cite{DHSW11, DHW11, DHW14,HLL,LW}. In particular, in \cite{LW}, such measures were shown to exist in abundance by relaxing the Hadamard triple condition proposed in \cite{JP98} and \cite{S00}.

\vspace{0.3cm}

We organize the paper as follows. In Section 2, we introduce some notations and properties on frame measures and packing pairs. Theorem \ref{t-1-1} (Theorem \ref{th2.1}) will be proved. In Section 3, We give some examples of packing pairs and $\mu_4$ and $\mu_{16}$ will be obtained as a special case of our examples. In Section 4, Theorem \ref{t-1-4} will be proved. In Section 5, we conclude with some remarks and follow-up problems related to Fourier frames for a sum of singular measures.

\vspace{0.3cm}

\section {Frame measures and Packing pairs}

\subsection{Frame measures.}
Let $\mu$ be a finite Borel measure. Given $f\in L^1(\mu)$,  We define the Fourier transform with respect to $\mu$ to be
 $$
(fd\mu)^{\widehat{}}(\xi) = \int f(x)e^{-2\pi i \langle\xi,x\rangle }d\mu(x).
$$
By putting $f$ to be the characteristic function on the support of $\mu$, we obtain the Fourier transform of a measure $\mu$
$$
\widehat{\mu}(\xi) =  \int e^{-2\pi i \langle\xi,x\rangle }d\mu(x).
$$
We say that a locally finite measure $\Gamma$ is a {\it frame measure} of $\mu$ if there exists $0<A\le B<\infty$ such that
\begin{equation}\label{eqframemeasure}
A\|f\|_{L^2(\mu)}^2 \le \int \left| (fd\mu)^{\widehat{}}(\xi)\right|^2 d\Gamma(\xi)\le B\|f\|_{L^2(\mu)}^2, \ \forall f\in L^2(\mu).
\end{equation}
Frame measures generalize the notion of Fourier frame in the sense that if $\{e^{2\pi i \langle\lambda,x\rangle}\}_{\lambda\in\Lambda}$ is a Fourier frame, then $\Gamma = \sum_{\lambda\in\Lambda}\delta_{\lambda}$ is the frame measure of $\mu$. The formulation of  frame measures originates from the notion of continuous frames in frame theory literature \cite{GH03}. It was studied and used in \cite{DHW14,DL14}.

\vspace{0.3cm}

%

The following lemma provides the connection between the existence of frame measure between $\mu$ and the convolution $\delta_t\ast\mu$, as well as between $\mu$, $\nu$ and the sum $\mu+\nu$. We will use it later.

\begin{lemma}\label{lem2.1}
Let $\mu$ and $\nu$ be Borel measures on ${\mathbb R}^d$. Then
\begin{enumerate}[(1)]
\item $\Gamma$ is a frame measure for $L^2(\mu)$ if and only if $\Gamma$ is a frame measure for $L^2(\delta_t\ast\mu)$ for any $t\in\mathbb{R}^d$ with the same frame bounds.
\item Suppose that $\mu(K_{\nu})=0$. If $\Gamma$ is a frame measure for $L^2(\mu+\nu)$, then $\Gamma$ is a frame measure for $L^2(\mu)$ and $L^2(\nu)$ with the same frame bounds.
\end{enumerate}
\end{lemma}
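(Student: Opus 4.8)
The plan is to prove both statements by exhibiting, in each case, an explicit linear isometry between the relevant $L^2$ spaces that leaves the modulus of the $\mu$-adapted Fourier transform unchanged pointwise in $\xi$; the frame inequality \eqref{eqframemeasure} then transfers verbatim, and in particular the constants $A,B$ are preserved.

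For part (1), I would define $U\colon L^2(\mu)\to L^2(\delta_t\ast\mu)$ by $(Uf)(x)=f(x-t)$. Since $\delta_t\ast\mu$ is the pushforward of $\mu$ under the translation $y\mapsto y+t$, the change of variables $x=y+t$ gives $\|Uf\|_{L^2(\delta_t\ast\mu)}^2=\int|f(y)|^2\,d\mu(y)=\|f\|_{L^2(\mu)}^2$, so $U$ is an onto isometry. The same substitution in the Fourier integral yields
\[
((Uf)\,d(\delta_t\ast\mu))^{\widehat{}}(\xi)=\int f(y)e^{-2\pi i\langle\xi,y+t\rangle}\,d\mu(y)=e^{-2\pi i\langle\xi,t\rangle}(f\,d\mu)^{\widehat{}}(\xi),
\]
so that $|((Uf)\,d(\delta_t\ast\mu))^{\widehat{}}(\xi)|=|(f\,d\mu)^{\widehat{}}(\xi)|$ for every $\xi$. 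Substituting into \eqref{eqframemeasure} shows that $\Gamma$ is a frame measure for $L^2(\mu)$ with bounds $A,B$ if and only if it is one for $L^2(\delta_t\ast\mu)$ with the same bounds, because $U$ is a bijection (the reverse implication also follows by replacing $t$ with $-t$).

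For part (2), the hypothesis $\mu(K_\nu)=0$, combined with the fact that $\nu$ is concentrated on its closed support $K_\nu$, is what lets me split $L^2(\mu+\nu)$ into its $\mu$- and $\nu$-parts. Given $f\in L^2(\mu)$, I would set $\tilde f=f$ off $K_\nu$ and $\tilde f=0$ on $K_\nu$. Because $\mu(K_\nu)=0$ we have $\int|\tilde f|^2\,d\mu=\|f\|_{L^2(\mu)}^2$, while $\tilde f$ vanishes $\nu$-a.e., so $\|\tilde f\|_{L^2(\mu+\nu)}^2=\|f\|_{L^2(\mu)}^2$ and likewise $(\tilde f\,d(\mu+\nu))^{\widehat{}}=(f\,d\mu)^{\widehat{}}$. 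Feeding $\tilde f$ into the frame inequality for $\mu+\nu$ reproduces \eqref{eqframemeasure} for $\mu$ with the same constants. Symmetrically, for $g\in L^2(\nu)$ I would set $\tilde g=g$ on $K_\nu$ and $\tilde g=0$ off $K_\nu$; here $\int|\tilde g|^2\,d\mu=\int_{K_\nu}|g|^2\,d\mu=0$ by hypothesis and $\int|\tilde g|^2\,d\nu=\|g\|_{L^2(\nu)}^2$ since $\nu(K_\nu^c)=0$, and again the transform is unchanged, giving the frame inequality for $\nu$.

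The computations are routine; the only point requiring genuine care is the role of the hypothesis $\mu(K_\nu)=0$ in part (2). It is exactly what guarantees that the ``extension by zero'' map preserves the $L^2(\mu+\nu)$-norm (so that no mass is lost or double-counted on the overlap $K_\mu\cap K_\nu$, which may be nonempty but is $\mu$-null) and that the unwanted cross term in the Fourier integral over $\mu+\nu$ vanishes. I would also note explicitly that, unlike part (1), the implication in part (2) is one-directional: from a frame measure for the sum one recovers frame measures for each summand, but the converse is not asserted.
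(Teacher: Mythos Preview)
Your proposal is correct and follows essentially the same approach as the paper: for (1) both use the translation isometry $f\mapsto f(\cdot\pm t)$ between $L^2(\mu)$ and $L^2(\delta_t\ast\mu)$ together with the unimodular phase factor $e^{-2\pi i\langle\xi,t\rangle}$, and for (2) both use the extension-by-zero trick on $K_\nu$ (exploiting $\mu(K_\nu)=0$) to embed $L^2(\mu)$ and $L^2(\nu)$ isometrically into $L^2(\mu+\nu)$ while leaving the Fourier transform unchanged. Your added remarks that $U$ is a bijection and that the implication in (2) is one-directional are accurate and match the paper's treatment.
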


\begin{proof}
(1) Given any $f\in L^2(\delta_t\ast\mu)$,
\begin{eqnarray}\label{eq2.2}
\int|f(x)|^2 \ d\delta_t\ast\mu(x) &=& \int|f(x+t)|^2 \ d\mu(x)
\end{eqnarray}
This gives that $f (\cdot+t)\in L^2(\mu)$. On the other hand,
\begin{eqnarray*}
|(f d\delta_t\ast\mu)^{\widehat{}}(\lambda)| = | e^{-2\pi i\langle t,\lambda\rangle}(f (\cdot+t) d\mu)^{\widehat{}}(\lambda)| = |(f (\cdot+t) d\mu)^{\widehat{}}(\lambda)|\\
\end{eqnarray*}
Putting $f(\cdot+t)$ into the definition of frame measure in (\ref{eqframemeasure}), we obtain that  $\Gamma$ is a frame measure for $\delta_t\ast\mu$. The converse is similar.

\vspace{0.3cm}
%

\noindent (2) Before the proof, we notice that the statement is trivial if the support of $\mu$ and $\nu$ has an empty intersection. Our proof here does not impose the disjointness assumption. To begin, the assumption that $\Gamma$ is a frame spectrum for $L^2(\mu+\nu)$ implies there exist two constants $0 < A \le B < \infty$ such that
\begin{eqnarray}\label{e-2-1}
A\|f\|_{L^2(\mu+\nu)}^2 \le \int |(fd(\mu+\nu))^{\widehat{}}(\xi)|^2d\Gamma(\xi) \le B\|f\|_{L^2(\mu+\nu)}^2.
\end{eqnarray}
By definition, we also have  $\|f\|_{L^2(\mu)}^2+ \|f\|_{L^2(\nu)}^2 = \|f\|_{L^2(\mu+\nu)}^2$.

\vspace{0.3cm}

\noindent For any $f\in L^2(\nu)$, we may take $f(x)=0$ for any $x$ outside $K_{\nu}$. Since $\mu(K_{\nu}) = 0$, we have
 $\|f\|_{L^2(\mu)}^2 = 0$. Thus, $f\in L^2(\mu+\nu)$.  Furthermore, $(fd\mu)^{\widehat{}}(\xi) = 0$ for any $\xi\in{\mathbb R}^d$. This implies that $\|f\|_{L^2(\mu+\nu)}^2 =\|f\|^2_{L^2(\nu)}$ and  $(fd(\mu+\nu))^{\widehat{}}(\xi) = (f d\nu)^{\widehat{}}(\xi)$. By putting $f$ into (\ref{e-2-1}), we obtain
 $\Gamma$ is a frame measure for $\nu$ with frame bounds $A,B$.

\vspace{0.3cm}

\noindent For $f\in L^2(\mu)$, take $g\in L^2(\mu+\nu)$ such that $g(x) =\left\{
                                                              \begin{array}{ll}
                                                                0, & \hbox{$x\in K_{\nu}$,} \\
                                                                f(x), & \hbox{$x\in K_{\mu}\setminus K_{\nu}$.}
                                                              \end{array}
                                                            \right.$ Then
\begin{eqnarray*}
\|g\|_{L^2(\mu+\nu)}^2 &=& \|g\|_{L^2(\mu)}^2 + \|g\|_{L^2(\nu)}^2 = \|g\|_{L^2(\mu)}^2 \ \ (\text{since} \ g = 0 \ \text{on} \ K_{\nu})\\
&=& \int_{K_{\mu} - K_{\nu}} |g(x)|^2\ d\mu(x) \ (\text{since} \ \mu(K_{\nu}) = 0)\\
&=& \|f\|_{L^2(\mu)}^2. \ (\text{since} \ \mu(K_{\nu}) = 0)
\end{eqnarray*}
By a similar derivation, we also have $(g d(\mu+\nu))^{\widehat{}}(\xi) =(fd\mu)^{\widehat{}}(\xi)$.
Substituting $g$ into (\ref{e-2-1}), we obtain that $\Gamma$ is a frame measure for $L^2(\mu)$ with frame bounds $A, B$.
\end{proof}

\vspace{0.3cm}

\subsection{Packing pair.}

\vspace{0.3cm}

Let $K_1,K_2$ be two compact sets on ${\mathbb R}^d$ and let $K_1+K_2$ be the standard Minkowski sum of two sets. We say that $(K_1,K_2)$ forms  a {\it packing pair} on ${\mathbb R}^d$ if
$$
(K_1-K_1)\cap (K_2-K_2) = \{\bf 0^d\}.
$$
Given two finite Borel measures $\nu$ and $\lambda$ on ${\mathbb R}^d$. Recall that the convolution of two measures is defined as
$$
\nu\ast\lambda(E) := \int_{{\mathbb R}^d}\int_{{\mathbb R}^d}{\bf 1}_E(x+y)d\nu(x)d\lambda(y),
$$
where ${\bf 1}_E$ denotes the characteristic function on a set $E$. We will also assume that $\nu$ and $\lambda$ are continuous measures without atoms. Denote by $K_{\nu}$ and $K_{\lambda}$ the support of $\nu$ and $\lambda$ respectively. Then $\mu: = \nu\ast\lambda$ has support
$$
K_{\mu} = K_{\nu}+K_{\lambda}.
$$
We say that the pair $(\nu,\lambda)$ is a {\it packing pair}  if the support $(K_\nu,K_{\lambda})$ forms a packing pair.

\begin{lemma}\label{lemma_packing}
Let $(K_1,K_2)$ be a packing pair. Then
\begin{enumerate}
\item For any $x,y\in K_{2}$ and $x\ne y$, $(K_{1}+x)\cap (K_{1}+y) = \emptyset$.
\item For any $E\subset K_1$ and $F\subset K_2$, ${\bf 1}_{E+F} (x+y) = {\bf 1}_{E} (x){\bf 1}_F(y)$ for any $x\in K_1$ and $y\in K_2$.
\item Suppose that $(\nu,\lambda)$ is a packing pair and $\mu = \nu\ast\lambda$. Then for any $E\subset K_{\nu}$ and $F\subset K_{\lambda}$,
\begin{equation}\label{eq2.4}
{({\bf 1}_{E+F}d\mu)^{\widehat{}}}(\xi) = (({\bf 1}_Ed\nu)\ast({\bf 1}_Fd\lambda))^{\widehat{}}(\xi) = ({\bf 1}_Ed\nu)^{\widehat{}}(\xi)({\bf 1}_Fd\lambda)^{\widehat{}}(\xi) .
\end{equation}
In particular,
\begin{equation}\label{eq2.5}
\mu(E+F) = \nu(E)\lambda(F).
\end{equation}
\end{enumerate}
\end{lemma}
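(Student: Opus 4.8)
The plan is to prove the three parts in order, since each relies on the previous, and then to deduce the ``in particular'' statement by evaluating the Fourier identity at $\xi={\bf 0}^d$. Parts (1) and (2) are purely set-theoretic and follow directly from the defining relation $(K_1-K_1)\cap(K_2-K_2)=\{{\bf 0}^d\}$; part (3) then combines part (2) with the Fubini-type representation of the convolution $\mu=\nu\ast\lambda$.

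For part (1), I would argue by contradiction. Suppose $x,y\in K_2$ with $x\neq y$ and pick $z\in (K_1+x)\cap(K_1+y)$. Writing $z=a+x=b+y$ with $a,b\in K_1$ gives $a-b=y-x$; the left side lies in $K_1-K_1$ and the right side in $K_2-K_2$, so by the packing hypothesis $y-x={\bf 0}^d$, contradicting $x\neq y$. For part (2), fix $x\in K_1$, $y\in K_2$. If $x\in E$ and $y\in F$, then $x+y\in E+F$ and both sides equal $1$. Otherwise the right side is $0$, and I claim $x+y\notin E+F$: if $x+y=e+f$ with $e\in E\subset K_1$, $f\in F\subset K_2$, then $x-e=f-y$ lies in $(K_1-K_1)\cap(K_2-K_2)=\{{\bf 0}^d\}$, forcing $x=e$ and $y=f$, contrary to the assumption that $x\notin E$ or $y\notin F$. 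Thus both sides vanish.

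For part (3), I would start from the convolution representation $\int g\,d\mu=\int\!\!\int g(x+y)\,d\nu(x)\,d\lambda(y)$, applied to $g(u)={\bf 1}_{E+F}(u)e^{-2\pi i\langle\xi,u\rangle}$. Because $\nu,\lambda$ are supported on $K_\nu=K_1$ and $K_\lambda=K_2$, the integration is effectively over $x\in K_1$, $y\in K_2$, so part (2) lets me replace ${\bf 1}_{E+F}(x+y)$ by ${\bf 1}_E(x){\bf 1}_F(y)$ and split the exponential as $e^{-2\pi i\langle\xi,x\rangle}e^{-2\pi i\langle\xi,y\rangle}$. The double integral then factors into $({\bf 1}_Ed\nu)^{\widehat{}}(\xi)\,({\bf 1}_Fd\lambda)^{\widehat{}}(\xi)$, which gives the right-hand equality; the middle equality is simply the convolution theorem, namely that the transform of $({\bf 1}_Ed\nu)\ast({\bf 1}_Fd\lambda)$ is the product of the two transforms. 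Setting $\xi={\bf 0}^d$ turns the three transforms into $\mu(E+F)$, $\nu(E)$ and $\lambda(F)$, yielding $\mu(E+F)=\nu(E)\lambda(F)$.

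The main obstacle is bookkeeping rather than anything conceptual, and two measure-theoretic points deserve care: the measurability of $E+F$ and the legitimacy of the Fubini interchange in part (3). The first is resolved by part (1): the packing property makes the addition map $(x,y)\mapsto x+y$ injective on $K_1\times K_2$, hence a homeomorphism onto the compact set $K_1+K_2$ (a continuous bijection between compact Hausdorff spaces), so $E+F$ is Borel whenever $E,F$ are. The second is immediate from the finiteness of $\nu$ and $\lambda$. With these in hand, part (3) reduces to the direct substitution described above.
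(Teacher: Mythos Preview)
Your proof is correct and follows essentially the same approach as the paper: parts (1) and (2) via the packing relation exactly as you do, and part (3) by inserting ${\bf 1}_{E+F}(x+y)={\bf 1}_E(x){\bf 1}_F(y)$ into the double-integral definition of $\mu=\nu\ast\lambda$ and factoring, then setting $\xi={\bf 0}^d$. The only difference is that you add the measurability discussion for $E+F$ (via the homeomorphism argument) and note the finiteness needed for Fubini; the paper simply leaves these points implicit.
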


\begin{proof}
For (1), if the intersection is non-empty. we can find $k_1,k_2\in K_{1}$ such that $k_1+x=k_2+y$. Then $x-y = k_2-k_1\in  (K_{2}-K_{2})\cap (K_{1}-K_{1})$. By the assumption of packing pair, $x=y$.

\vspace{0.3cm}

For (2), if $x+y\not\in E+F$, then $x\not\in E$ or $y\not\in F$. In this case, ${\bf 1}_{E+F} (x+y) =0 = {\bf 1}_E(x){\bf 1}_F (y)$. On the other hand, if $x+y\in E+F$, then $x+y = e+f$ for some $e\in E\subset K_1$ and $f\in F\subset K_2$. This implies that $x-e = f-y\in (K_1-K_1)\cap(K_2-K_2)$.  By the assumption of packing pair, $x=e\in E$ and $y=f\in F$. This shows that ${\bf 1}_{E+F} (x+y) =1 = {\bf 1}_E(x){\bf 1}_F (y)$.

\vspace{0.3cm}

For (3), the second equality of (\ref{eq2.4}) is just the standard property of Fourier transform of measures. For the first equality, we note that
$$
\begin{aligned}
(({\bf 1}_Ed\nu)\ast({\bf 1}_Fd\lambda))^{\widehat{}}(\xi) = &\int\int {\bf 1}_E(x){\bf 1}_F(y)e^{-2\pi i \langle \xi, x+y\rangle}d\nu(x)d\lambda(y)\\
=&\int\int {\bf 1}_{E+F}(x+y)e^{-2\pi i \langle \xi, x+y\rangle}d\nu(x)d\lambda(y)\\
=&  ({\bf 1}_{E+F}d\mu)^{\widehat{}}(\xi).
\end{aligned}
$$
The second equality above uses (2) since $x\in K_{\nu}$ and $y\in K_{\lambda}$ with $(K_{\nu},K_{\lambda})$ forms a packing pair. Finally, (\ref{eq2.5}) follows from (\ref{eq2.4}) by taking $\xi = 0$.
\end{proof}

\vspace{0.3cm}
\begin{lemma}\label{lem2.2}
Let  $\nu$ and $\lambda$ be finite Borel measures without atoms (i.e. all points have measure zero) forming a packing pair and  $\mu = \nu\ast\lambda$. Then, for any $t\in {\mathbb R}^d$, $\mu(K_{\nu}+t) = 0$.
\end{lemma}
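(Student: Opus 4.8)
The plan is to compute $\mu(K_\nu+t)$ directly from the definition of convolution, extracting the packing structure through a single application of Tonelli's theorem. First I would use the standard slicing formula for a convolution to write
$$
\mu(K_\nu+t)=(\nu\ast\lambda)(K_\nu+t)=\int_{K_\nu}\lambda\bigl((K_\nu+t)-x\bigr)\,d\nu(x),
$$
reducing the problem to controlling, for a fixed $x$, the $\lambda$-measure of the translate $K_\nu+t-x$. Since $\lambda$ is supported on $K_\lambda$, this quantity equals $\lambda\bigl(K_\lambda\cap(K_\nu+t-x)\bigr)$.

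The key step is to show that for every $x$ the fibre $K_\lambda\cap(K_\nu+t-x)$ contains at most one point. Indeed, suppose $y_1,y_2$ both lie in it. Then $y_1,y_2\in K_\lambda$, while $y_i-(t-x)\in K_\nu$ for $i=1,2$, so that
$$
y_1-y_2=\bigl(y_1-(t-x)\bigr)-\bigl(y_2-(t-x)\bigr)\in K_\nu-K_\nu .
$$
On the other hand $y_1-y_2\in K_\lambda-K_\lambda$, and the packing pair hypothesis $(K_\nu-K_\nu)\cap(K_\lambda-K_\lambda)=\{{\bf 0}^d\}$ forces $y_1=y_2$. Hence each fibre is empty or a singleton; this is the measure-theoretic counterpart of (1) of Lemma \ref{lemma_packing}.

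Since $\lambda$ has no atoms, the $\lambda$-measure of a set containing at most one point is zero, so the inner integrand $\lambda\bigl(K_\lambda\cap(K_\nu+t-x)\bigr)$ vanishes for every $x$. Integrating against $\nu$ then yields $\mu(K_\nu+t)=0$, as desired.

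Being a short computation, this lemma presents no serious technical obstacle; the single point demanding care is the order of integration. Integrating out the $\lambda$-variable first is essential, because only the differences $y_1-y_2$ of two points in a common $x$-fibre lie simultaneously in $K_\lambda-K_\lambda$ and in $K_\nu-K_\nu$, which is exactly what the packing condition governs; the reverse order produces differences lying in $K_\nu-K_\nu$ alone and carries no usable information. I would also remark that, although the statement assumes both measures to be atomless, only the atomlessness of $\lambda$ is actually used here.
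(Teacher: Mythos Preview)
Your argument is correct and in fact cleaner than the paper's. The paper slices the convolution in the \emph{opposite} order, writing
\[
\mu(K_\nu+t)=\int_{K_\lambda}\nu(K_\nu+t-x)\,d\lambda(x),
\]
and then argues by contradiction: if the left side were positive, there would be an uncountable set of $x\in K_\lambda$ with $\nu(K_\nu+t-x)\ge 1/n_0$; but by Lemma~\ref{lemma_packing}(1) the sets $K_\nu+t-x_i$ are pairwise disjoint for distinct $x_i\in K_\lambda$, forcing $\nu(\mathbb{R}^d)=\infty$. So the paper uses the packing condition through disjointness of whole translates of $K_\nu$, while you use it to pin each $\lambda$-fibre down to a single point. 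Your route avoids the contradiction entirely and makes the role of the packing hypothesis more transparent; the paper's route, on the other hand, reuses exactly the same disjointness device that drives the neighbouring Lemma~\ref{lem2.3}, so the two lemmas there have parallel proofs. Your closing remark that only the atomlessness of $\lambda$ is used is correct, and indeed the paper's proof also only invokes the atomlessness of $\lambda$.
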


\begin{proof}
If $t\in K_{\lambda}$, then it follows directly from (\ref{eq2.5}) that $\mu(K_{\nu}+x) = \nu(K_{\nu})\lambda(\{x\}) = 0$ since $\lambda$  has no atoms. Suppose that there exists $t\in{\mathbb R}^d$ such that $\mu(K_{\nu}+t)>0$. By definition,
$$
0< \mu (K_{\nu}+t) = \int \nu (K_{\nu}+t-x)d\lambda (x).
$$
Hence, the set $E = \{x\in K_{\lambda}:\nu (K_{\nu}+t-x) >0 \}$ will have a positive $\lambda$-measure (i.e. $\lambda(E)>0$). As $\lambda$ has no atom, $E$ is an uncountable set. In particular, there exists $n_0$ such that the set
$$
E_{n_0}: = \left\{x\in E: \nu (K_{\nu}+t-x) \ge \frac{1}{n_0} \right\}
$$
is uncountable. Take any countable sequence of distinct points $x_1,...,x_n,...$ from $E_{n_0}$. As $(K_{\nu},K_{\lambda})$ forms a packing pair, so is $(K_{\nu},-K_{\lambda})$. By Lemma \ref{lemma_packing} (1), $(K_{\nu}-x_i)\cap (K_{\nu}-x_j) = \emptyset$ for $i\ne j$. Thus, $(K_{\nu}+t-x_i)\cap (K_{\nu}+t-x_j) = \emptyset$.   Hence, by the monotonicity and disjoint countable additivity of measure,
$$
\begin{aligned}
\nu({\mathbb R}^d)\ge \nu \left(\bigcup_{n=1}^{\infty}\left((K_{\nu}+t-x_n)\right)\right)  \ge&  \nu \left(\bigcup_{n=1}^N\left((K_{\nu}+t-x_n)\right)\right)\\
=&\sum_{n=1}^N\nu\left((K_{\nu}+t-x_n)\right) \ge \frac{N}{n_0}.
\end{aligned}
$$
Since $N$ is arbitrary, $\nu ({\mathbb R}^d) = \infty$. This is a contradiction since $\nu$ is a finite Borel measure. This  establishes the lemma.
\end{proof}

\vspace{0.3cm}

Suppose that $(\nu,\lambda)$ forms a packing pair and $\mu = \nu\ast\lambda$. We also assume that $0\in K_{\lambda}$. Otherwise, we may translate our measures and consider $\delta_{-x}\ast \mu = \nu\ast (\delta_{-x}\ast\lambda)$ with $x\in K_{\lambda}$. In this translation, $0\in K_{\lambda}$. In other words, for any open ball ${\mathcal B}$ centered around $0$, we have $\lambda({\mathcal B})>0$ if $\lambda$ has no atoms.

\vspace{0.3cm}

For $t\in{\mathbb R}^d$, we define a sum of measure by
$$
\rho_t  = \mu + \delta_t\ast\nu.
$$
Now, $K_{\rho_t}$ =  $K_{\mu}\cup(K_{\nu}+t)$. As $0\in K_{\lambda}$, we have $K_{\nu}\subset K_{\mu}$. The following lemma will be needed in the proof of our main theorem.

\begin{lemma}\label{lem2.3}
Let $\nu$ and $\lambda$ be two finite Borel measures without atoms and $(\nu,\lambda)$ forms a packing pair. Let also $\mu = \nu\ast\lambda$. Then for any $t\in{\mathbb R}^d$, there exists $x\in K_{\lambda}$ such that
$$
\nu ((K_{\nu}+t)\cap (K_{\nu}+x)) =0.
$$
\end{lemma}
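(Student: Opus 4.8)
The plan is to mirror the pigeonhole strategy already used in the proof of Lemma \ref{lem2.2}, replacing the translates appearing there by a family of ``overlap sets'' indexed over $K_{\lambda}$. Fix $t\in{\mathbb R}^d$ and for each $x\in K_{\lambda}$ set
$$
S_x := (K_{\nu}+t)\cap (K_{\nu}+x).
$$
The goal is to produce one $x\in K_{\lambda}$ with $\nu(S_x)=0$, and I would argue by contradiction, assuming $\nu(S_x)>0$ for every $x\in K_{\lambda}$.

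The two structural facts I would exploit are: (i) since $\lambda$ is a nonzero finite measure without atoms, its support $K_{\lambda}$ is uncountable (a countable support would force $\lambda(K_{\lambda})=\sum_{x}\lambda(\{x\})=0$, contradicting $\lambda(K_{\lambda})>0$); and (ii) since $(K_{\nu},K_{\lambda})$ is a packing pair, Lemma \ref{lemma_packing}(1) gives $(K_{\nu}+x)\cap(K_{\nu}+x')=\emptyset$ for distinct $x,x'\in K_{\lambda}$. Because $S_x\subset K_{\nu}+x$, fact (ii) shows the family $\{S_x\}_{x\in K_{\lambda}}$ is pairwise disjoint, while each $S_x\subset K_{\nu}+t$ lies inside a single set of finite $\nu$-measure.

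From here the contradiction is immediate from the standard observation that a finite measure space admits at most countably many pairwise disjoint sets of positive measure: the set $\{x\in K_{\lambda}:\nu(S_x)>0\}$ would have to be countable, contradicting (i) together with the standing assumption that it is all of $K_{\lambda}$. Concretely, and in the spirit of Lemma \ref{lem2.2}, one can instead choose $n_0$ so that $\{x\in K_{\lambda}:\nu(S_x)\ge 1/n_0\}$ is uncountable, pick $N$ distinct points $x_1,\dots,x_N$ from it, and use disjointness together with $S_{x_n}\subset K_{\nu}+t$ to obtain $\nu({\mathbb R}^d)\ge\sum_{n=1}^N\nu(S_{x_n})\ge N/n_0$, which is absurd as $N\to\infty$. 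Either way some $x\in K_{\lambda}$ must satisfy $\nu(S_x)=0$.

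The argument is short, and the only point demanding care is the bookkeeping for disjointness: I must apply the packing-pair conclusion to the pair $(K_{\nu},K_{\lambda})$ and check that $S_x$ genuinely inherits the disjointness of the full translates $K_{\nu}+x$. I do not expect any serious obstacle here, since the finiteness of $\nu$ and the atomlessness of $\lambda$ carry out all the essential work, exactly as in the preceding lemma.
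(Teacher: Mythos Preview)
Your proposal is correct and follows essentially the same argument as the paper's proof: contradiction, uncountability of $K_{\lambda}$ from atomlessness, extraction of an $n_0$-level set, and the disjointness of the translates $K_{\nu}+x$ from Lemma \ref{lemma_packing}(1). If anything, your version is slightly tidier, since you bound directly by $\nu({\mathbb R}^d)$ via $S_x\subset K_{\nu}+x$, whereas the paper routes the estimate through $K_{\mu}\cap(K_{\nu}+t)$ before reaching the same contradiction.
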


\begin{proof}
Let $t\in{\mathbb R}^d$ be fixed.  Then for any $x\in K_{\lambda}$, $\nu ((K_{\nu}+t)\cap (K_{\nu}+x))>0$. As $\lambda$ is a measure without atoms, $K_{\lambda}$ has uncountably many points. In particular, there exists a positive integer $n_0$ such that the set
$$
E_{n_0}: = \left\{x\in K_{\lambda} : \nu ((K_{\nu}+t)\cap (K_{\nu}+x)) \ge \frac{1}{n_0}\right\}
$$
has uncountably many points. Take any countable sequence of distinct points $x_1,...,x_n,...$ from $E_{n_0}$. Now,
$$
K_{\mu} = K_{\nu}+K_{\lambda} = \bigcup_{x\in K_{\lambda}} (K_{\nu}+x) \supset \bigcup_{n=1}^N(K_{\nu}+x_n),
$$
for all integers $N$. Moreover, as $(\nu,\lambda)$ forms a packing pair, $(K_{\nu}+x_i)\cap (K_{\nu}+x_j) = \emptyset$ for $i\ne j$ by Lemma \ref{lemma_packing} (1).  Hence, by the monotonicity and disjoint countable additivity of measure,
$$
\begin{aligned}
\nu (K_{\mu}\cap (K_{\nu}+t))  \ge&  \nu \left(\bigcup_{n=1}^N\left((K_{\nu}+x_n)\cap (K_{\nu}+t)\right)\right)\\
=&\sum_{n=1}^N\nu\left((K_{\nu}+x_n)\cap (K_{\nu}+t)\right) \ge \frac{N}{n_0}.
\end{aligned}
$$
Since $N$ is arbitrary, $\nu ({\mathbb R}^d) \ge \nu (K_{\mu}\cap (K_{\nu}+t)) = \infty$. This is a contradiction since $\nu$ is a finite Borel measure. This establishes the lemma.
\end{proof}

\vspace{0.3cm}
\begin{lemma}\label{lemBorel}
Let $\rho = \mu+\nu$ and let $X$ be a Borel support of $\rho$. Suppose that $\mu(K_{\nu}) = 0$. Then $X\cap K_{\mu}$ is a Borel support of $\mu$ and  $X\cap K_{\nu}$ is a Borel support of $\nu$.
\end{lemma}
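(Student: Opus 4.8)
The plan is to reduce the whole statement to one elementary observation about the $\rho$-null complement of $X$, and then to split that null set using non-negativity of $\mu$ and $\nu$. Since $X$ is a Borel support of $\rho$, by definition $X\subset K_{\rho}$ and $\rho(X)=\rho(K_{\rho})$; because $K_{\rho}$ is the closed support of $\rho$ it carries full mass, $\rho(K_{\rho})=\rho({\mathbb R}^d)$, so $\rho({\mathbb R}^d\setminus X)=0$. As $\rho=\mu+\nu$ with both measures non-negative, this single equation forces $\mu({\mathbb R}^d\setminus X)=0$ and $\nu({\mathbb R}^d\setminus X)=0$ simultaneously. Equivalently, $\mu(X)=\mu({\mathbb R}^d)=\mu(K_{\mu})$ and $\nu(X)=\nu({\mathbb R}^d)=\nu(K_{\nu})$, where the final equalities just record that $K_{\mu}$ and $K_{\nu}$ are the closed supports of $\mu$ and $\nu$ and hence carry full mass.

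Next I would intersect with the closed supports. For the $\mu$-part, $X\cap K_{\mu}\subset K_{\mu}$ is immediate, so it only remains to verify $\mu(X\cap K_{\mu})=\mu(K_{\mu})$. Writing $X=(X\cap K_{\mu})\cup(X\setminus K_{\mu})$ and noting $X\setminus K_{\mu}\subset {\mathbb R}^d\setminus K_{\mu}$, which is $\mu$-null by definition of the closed support, I get $\mu(X\cap K_{\mu})=\mu(X)=\mu(K_{\mu})$. Thus $X\cap K_{\mu}$ is a Borel support of $\mu$. The $\nu$-part is identical after replacing $\mu,K_{\mu}$ by $\nu,K_{\nu}$: one has $X\cap K_{\nu}\subset K_{\nu}$ and $\nu(X\setminus K_{\nu})\le \nu({\mathbb R}^d\setminus K_{\nu})=0$, so $\nu(X\cap K_{\nu})=\nu(X)=\nu(K_{\nu})$.

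The argument is essentially formal, so there is no genuine analytic obstacle; the only point requiring care is the bookkeeping between the closed support $K_{\rho}$ and the chosen Borel support $X$, together with the splitting of the $\rho$-null set ${\mathbb R}^d\setminus X$ into a $\mu$-null and a $\nu$-null set, which is precisely where non-negativity of the two measures enters. I also note that the hypothesis $\mu(K_{\nu})=0$ is not in fact needed to verify the two claimed statements; its role is rather to supply the sharper fact that $X\cap K_{\mu}$ may be further thinned to the $\mu$-full, $K_{\nu}$-disjoint set $X\cap(K_{\mu}\setminus K_{\nu})$, since $\mu\bigl(X\cap K_{\mu}\cap K_{\nu}\bigr)\le\mu(K_{\nu})=0$. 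This is the disjoint form in which the lemma is meant to be combined with Lemma \ref{lem2.1}(2) in the proof of the main theorem.
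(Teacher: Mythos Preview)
Your proof is correct. It is also slightly different from the paper's argument: the paper routes both computations through $\rho$ and explicitly uses the hypothesis $\mu(K_{\nu})=0$ (for $\nu$ it writes $\nu(K_{\nu})=\rho(K_{\nu})=\rho(K_{\nu}\cap X)=\mu(K_{\nu}\cap X)+\nu(K_{\nu}\cap X)=\nu(K_{\nu}\cap X)$, and for $\mu$ it first strips off $K_{\nu}$ via $\mu(K_{\mu})=\mu(K_{\mu}\setminus K_{\nu})$ before passing to $\rho$). Your approach instead observes once that $\rho({\mathbb R}^d\setminus X)=0$ forces $\mu({\mathbb R}^d\setminus X)=\nu({\mathbb R}^d\setminus X)=0$ by non-negativity, and then intersects with the closed supports; this is a bit cleaner and, as you correctly point out, shows that the hypothesis $\mu(K_{\nu})=0$ is not actually needed for the lemma as stated. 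Your closing remark about the role of that hypothesis---producing the disjoint refinement $X\cap(K_{\mu}\setminus K_{\nu})$ used downstream---is also accurate.
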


\begin{proof}
We first note that if $X$ is a Borel support of a measure $\mu$. Then $\mu(E) = \mu(X\cap E) $ for any Borel set $E$. Now,
$$
\nu(K_{\nu}) = \rho(K_{\nu}) = \rho(K_{\nu}\cap X) = \mu(K_{\nu}\cap X)+\nu(K_{\nu}\cap X) = \nu(K_{\nu}\cap X).
$$
Thus, $K_{\nu}\cap X$ is a Borel support of $\nu$. For the measure $\mu$.
$$
\begin{aligned}
\mu(K_{\mu})  =& \mu(K_{\mu}\setminus K_{\nu})= \rho(K_{\mu}\setminus K_{\nu}) = \rho((K_{\mu}\setminus K_{\nu})\cap X)  \\ =&\mu((K_{\mu}\setminus K_{\nu})\cap X)+\nu((K_{\mu}\setminus K_{\nu})\cap X) = \mu(K_{\mu}\cap X).
\end{aligned}
$$
\end{proof}
\vspace{0.3cm}

We are now ready to prove Theorem \ref{t-1-1} which is restated as follows.

\begin{theorem}\label{th2.1}
Let $(\nu,\lambda)$ be a packing pair and $\nu,\lambda$ have no atoms with $\mu = \nu\ast\lambda$. Then $\rho_t$ is translationally singular and it does not admit any frame measure.
\end{theorem}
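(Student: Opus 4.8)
The plan is to prove the two assertions by independent mechanisms: Theorem~\ref{th1} is of no help here because it presupposes translational absolute continuity, so I cannot deduce ``no frame measure'' from singularity. I treat the frame-measure statement first, as it is the substantive conclusion.

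\emph{No frame measure.} Suppose toward a contradiction that $\Gamma$ is a frame measure for $\rho_t$ with bounds $0<A\le B<\infty$. The first move is to reduce to a statement about $\mu$ and $\nu$ with a \emph{single} pair of bounds. By Lemma~\ref{lem2.2}, $\mu(K_\nu+t)=0$, so $\mu$ gives no mass to the support $K_\nu+t$ of $\delta_t\ast\nu$; Lemma~\ref{lem2.1}(2) then makes $\Gamma$ a frame measure for both $\mu$ and $\delta_t\ast\nu$ with the same bounds $A,B$, and Lemma~\ref{lem2.1}(1) converts the latter into a frame measure for $\nu$ with bounds $A,B$. Thus $\Gamma$ is simultaneously, with one bound, a frame measure for $\mu=\nu\ast\lambda$ and for $\nu$. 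I then exploit the packing structure to insert a vanishingly thin $\lambda$-factor. Fix $E=K_\nu$ and, using $0\in K_\lambda$, let $F_r=K_\lambda\cap B(0,r)$, so $\nu(E)>0$ and $\lambda(F_r)>0$ for all $r>0$. The frame inequality for $\nu$ applied to $g={\bf 1}_{E}$ shows that $d\Gamma_E:=|({\bf 1}_{E}d\nu)^{\widehat{}}|^2\,d\Gamma$ is a \emph{finite} measure with $\Gamma_E(\mathbb R^d)\le B\nu(E)$. The frame inequality for $\mu$ applied to $f={\bf 1}_{E+F_r}$, together with the factorization \eqref{eq2.4} and $\mu(E+F_r)=\nu(E)\lambda(F_r)$ from \eqref{eq2.5}, yields
\[
A\,\nu(E)\lambda(F_r)\le \int |({\bf 1}_{F_r}d\lambda)^{\widehat{}}(\xi)|^2\,d\Gamma_E(\xi)\le \lambda(F_r)^2\,\Gamma_E(\mathbb R^d)\le B\,\nu(E)\,\lambda(F_r)^2 ,
\]
using the crude bound $|({\bf 1}_{F_r}d\lambda)^{\widehat{}}(\xi)|\le \lambda(F_r)$. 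Dividing by $\nu(E)\lambda(F_r)>0$ gives $A\le B\lambda(F_r)$, and letting $r\to 0$ forces $\lambda(F_r)\to\lambda(\{0\})=0$ since $\lambda$ has no atoms, whence $A\le 0$, a contradiction. As a Fourier frame induces the frame measure $\Gamma=\sum_{\lambda}\delta_\lambda$, this also excludes any Fourier frame.

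\emph{Translational singularity.} I must exhibit failure of translational absolute continuity on \emph{every} Borel support $X$ of $\rho_t$. Given $X$, set $E=X\cap K_\mu$, which is a Borel support of $\mu$ by Lemma~\ref{lemBorel} (valid since $\mu(K_\nu+t)=0$), so $\rho_t(E)\ge\mu(K_\mu)>0$. The heart of the matter is to select one packing slice $K_\nu+x$, $x\in K_\lambda$, enjoying two properties that each hold for $\lambda$-a.e.\ $x$: first, $\rho_t(K_\nu+x)=0$, because $\mu(K_\nu+x)=0$ always while $\delta_t\ast\nu(K_\nu+x)=\nu(K_\nu+x-t)$ vanishes for all but countably many $x$ by the disjointness-and-finiteness argument of Lemma~\ref{lem2.3} (the translates $K_\nu+x-t$ are pairwise disjoint by the packing hypothesis); second, the slice is \emph{good}, i.e.\ $\nu((X-x)\cap K_\nu)=\nu(K_\nu)$, which holds $\lambda$-a.e.\ because $\mu(K_\mu\setminus X)=0$ disintegrates through $\mu=\nu\ast\lambda$ over these slices. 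Choosing such an $x$, I take $F=X\cap(K_\nu+x)\subseteq E$ and translate $s=t-x$. Using $(F+s)\cap(E+s)=(F\cap E)+s=F+s$, one has $\rho_t(F)\le\rho_t(K_\nu+x)=0$, while, since $F+s\subseteq K_\nu+t$ makes the $\mu$-part vanish,
\[
\omega_s(F)=\rho_t(F+s)=\delta_t\ast\nu(F+t-x)=\nu(F-x)=\nu((X-x)\cap K_\nu)=\nu(K_\nu)>0 .
\]
Hence $\omega_s$ is not absolutely continuous with respect to $\rho_t$, so $\rho_t$ is not translationally absolutely continuous on $E\subset X$; as $X$ was arbitrary, $\rho_t$ is translationally singular.

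The measure-theoretic bookkeeping is routine; the two genuinely structural points are the insertion of the factor $\lambda(F_r)\to 0$ (where the convolution/packing hypothesis is indispensable and the single lower bound $A$ is contradicted) and, for singularity, the passage to an \emph{arbitrary} Borel support. I expect the latter to be the main obstacle: since each slice $K_\nu+x$ is $\mu$-null, a Borel support $X$ could avoid any slice named in advance, so I cannot fix the slice beforehand and must instead prove that $\lambda$-almost every slice is simultaneously $\rho_t$-null and good, and only afterward extract one. Establishing the good-slice property by disintegrating $\mu=\nu\ast\lambda$ and matching it against the co-countable vanishing coming from the packing argument is the delicate step.
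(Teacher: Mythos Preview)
Your argument is correct and matches the paper's: the frame-measure part is identical (factor ${\bf 1}_{E+F_r}$ via \eqref{eq2.4}--\eqref{eq2.5}, use that $\Gamma$ is a frame measure for both $\mu$ and $\nu$ with the same bounds, and let $\lambda(F_r)\to 0$), and the singularity part likewise selects a slice $K_\nu+x$ with $\rho_t$-measure zero but positive $\omega_{t-x}$-measure. Your disintegration step for the arbitrary-Borel-support case (the ``good slice'' property $\nu((X-x)\cap K_\nu)=\nu(K_\nu)$ for $\lambda$-a.e.\ $x$) is more explicit and careful than the paper's, which simply instructs to ``replace all $K_\mu$ by $X_\mu$ and $K_\nu$ by $X_\nu$ and repeat the proof''.
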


\begin{proof}
Let $E = K_{\mu}$ which is inside $K_{\rho_t}$. Then $\rho_t(E) \ge \mu(E) = 1 > 0$. We will prove that $\rho_t$ is translationally singular on $E$.  Using Lemma \ref{lem2.3}, we can take an  $x\in K_{\lambda}$ such that $\nu((K_{\nu}+x)\cap (K_{\nu}+t)) = 0$.  Consider $F = K_{\nu}+x \subset E$ (as $x\in K_{\lambda}$). Since the support of $\delta_t\ast \nu$ is $K_{\nu}+t$, we now have $\delta_t\ast\nu(K_{\nu}+x) = \nu((K_{\nu}+x)\cap (K_{\nu}+t)) = 0$. Meanwhile, $\mu(K_{\nu}+x) = 0$ by Lemma \ref{lem2.2}. Thus,
$$
\rho_t(K_{\nu}+x) = \mu(K_{\nu}+x)+ \delta_t\ast\nu(K_{\nu}+x) = 0.
$$
On the other hand,
\begin{eqnarray*}
\omega_{t-x}(K_{\nu}+x) &=& \rho_t((K_{\nu}+x+(t-x))\cap (K_{\mu}+(t-x)))\\
&= & \mu ((K_{\nu}+t)\cap (K_{\mu}+(t-x)))+ \nu (K_{\nu}\cap (K_{\mu}-x))\\
&\ge &\nu(K_{\nu}\cap (K_{\mu}-x)).
\end{eqnarray*}
Note that  $K_{\nu}+x\subset K_{\mu}$, so we have $(K_{\nu}+x)\cap K_{\mu} = K_{\nu}+x$. Hence, after a translation of $x$, $K_{\nu}\cap (K_{\mu}-x) = K_{\nu}$ and
$$
\nu(K_{\nu}\cap (K_{\mu}-x)) = \nu (K_{\nu})=1.
$$
Thus $\omega_{t-x}(K_{\nu}+x)>0$. This shows that $\omega_{t-x}$ is singular with respect to $\mu$.  $\rho_t$ is not  translationally absolutely continuous on $E = K_{\mu}$. Thus,  $\rho_t$ is  translationally singular if we consider the support to be $K_{\rho_t}$.

\vspace{0.3cm}

In general, let $X$ be a Borel support of $\rho_t$. Then Lemma \ref{lemBorel} implies that  $X = X_{\mu}\cup X_{\nu}$, where $X_{\mu}$ is a Borel support of $\mu$ and $X_{\nu}$ is a Borel support of $\nu$. We replace all $K_{\mu}$ by  $X_{\mu}$ and $K_{\nu}$ by  $X_{\nu}$ and repeat the proof in the previous paragraph. Since $X_{\mu}\subset K_{\mu}$ and $X_{\nu}\subset{K_{\nu}}$, the proof remains the same. Thus  $\rho_t$ is translationally singular.

\vspace{0.3cm}


\vspace{0.3cm}

We now show that $\rho_t$ does not admit any frame measure. Suppose on the contrary that $\rho_t$ admits a frame measure $\Gamma$ with frame bounds $0< A \le B < \infty$. As discussed before Lemma \ref{lem2.3}, we may assume $0\in K_{\lambda}$. Let
$$
V_n = K_{\nu} +({\mathcal B}(0,1/n)\cap K_{\lambda})
,$$
where ${\mathcal B}(0,1/n)$ denotes the ball of radius $1/n$ centered at $0$. Note that $0\in K_{\lambda}$. This means that $\lambda({\mathcal B}(0,1/n))>0$ for any $n$. Now, $V_n\subset K_{\mu}$ and  ${\bf 1}_{V_n}\in L^2(\mu)$. By Lemma \ref{lem2.1}, with $\mu(K_{\nu}+t)=0$ in Lemma \ref{lem2.2}, $\Gamma$ is also a frame measure for $L^2(\mu)$ and $L^2(\delta_t\ast\nu)$ for any $t\in{\mathbb R}^d$ with the same bounds $A\le B$. Therefore, we have
\begin{eqnarray}\label{eq2.7}
A\cdot\mu(V_n) = A\int |{\bf 1}_{V_n}(x)|^2 \ d\mu(x)
&\le& \int |({\bf 1}_{V_n}d\mu)^{\widehat{}}(\xi) |^2 \ d\Gamma(\xi) \nonumber\\
&=& \int |({\bf 1}_{K_{\nu}} \ d\nu)^{\widehat{}}(\xi) |^2  |({\bf 1}_{{\mathcal B}(0,1/n)} \ d\lambda)^{\widehat{}}(\xi) |^2 \ d\Gamma(\xi)\nonumber\\
&\le &  (\lambda({\mathcal B}(0,1/n)))^2 \int |({\bf 1}_{K_{\nu}} \ d\nu)^{\widehat{}}(\xi) |^2 \ d\Gamma(\xi)\\
&\le& B (\lambda({\mathcal B}(0,1/n)))^2 \int |({\bf 1}_{K_{\nu}}|^2 \ d\nu\nonumber\\
& = & B\cdot(\lambda({\mathcal B}(0,1/n)))^2\nonumber.
\end{eqnarray}
By (\ref{eq2.5}),  $\mu(V_n) = \nu(K_{\nu})\lambda({\mathcal B}(0,1/n) \cap K_{\lambda}) = \lambda({\mathcal B}(0,1/n))$. (\ref{eq2.7}) implies that
$$A\lambda({\mathcal B}(0,1/n)) \le B(\lambda({\mathcal B}(0,1/n)))^2,$$
 which is equivalent to
\begin{eqnarray}\label{eq2.8}
 \frac{1}{\lambda({\mathcal B}(0,1/n))} \le \frac{B}{A}.
\end{eqnarray}
As $\lambda$ has no atoms, $\lambda({\mathcal B}(0,1/n))$ approaches zero as $n$ tends to infinity. This means that the left hand side of (\ref{eq2.8}) can be made arbitrarily large, while the right hand side remains bounded. This is a contradiction. Hence, we conclude that $\rho_t$ does not admit any frame measure.
\end{proof}

\section{Examples of packing pair}
In this section, we show that packing pairs exist in many circumstances using self-affine sets and self-affine measures.

\vspace{0.3cm}

Let $R\in M_d(\mathbb{Z})$ be a $d\times d$ \emph{expanding matrix} (all its eigenvalues $\lambda_i$ satisfy
$\lvert\lambda_i\rvert>1$) with integer entries and let $B\subset\mathbb{Z}^d$ be a finite set of
integer vectors, which we call \emph{a digit set}.
Given a pair $(R,B)$, define
$$
\tau_b(x)=R^{-1}(x+b), \ b\in B.
$$
The family of maps $\{\tau_b(x)\}_{b\in B}$ is called an \emph{iterated function system} (IFS). The invariant attractor $T(R,B)$ determined by $(R, B)$ is the unique non-empty compact set satisfying
$$
K_{R,B}=\bigcup\limits_{b\in B}\tau_b(K_{R,B}),
$$
or equivalently,
\begin{eqnarray}\label{e-2-6}
K_{R,B} = \left\{\sum_{n=1}^{\infty}R^{-n}b_n: b_n\in B\right\}.
\end{eqnarray}
The set $K_{R,B}$ is called a {\it self-affine set}. If $R = \rho O$, where $\rho>1$ and $O$ is an orthogonal matrix, then $K_{R,B}$ is called  a self-similar set.  We say that the IFS $\{\tau_b\}_{b\in B}$ satisfies the \emph{strong separation condition} (SSC) if $\tau_b(K_{R,B})\cap \tau_{b'}(K_{R,B})$ are all disjoint for all $b\ne b'$ (See \cite{F} for details).

\vspace{0.3cm}

The self-affine measure determined by the pair $(R, B)$ (with equal weights) is the unique probability measure $\mu:=\mu_{R,B}$ satisfying
$$
\mu(E) = \sum\limits_{b\in B} \frac{1}{\# B} \mu(\tau_b^{-1}(E)) \ \text{for any Borel set} \ E\subset \mathbb{R}^d.
$$
 The measure can also be realized as an infinite convolutions of Dirac measures:
\begin{eqnarray}\label{e-2-5}
\mu_{R,B} = \delta_{R^{-1}B}\ast\delta_{R^{-2}B}\ast\delta_{R^{-3}B}\ast\dotsm,
\end{eqnarray}
where $\delta_{A} = \frac{1}{\#A}\sum_{a\in{A}}\delta_a$.

\vspace{0.3cm}

\subsection{Two criteria for packing pair.} We will give two simple criteria  for packing self-affine pair. The following proposition offers the first one.
\begin{proposition}\label{prop3.1}
Let $K_{R,B}$ and $K_{R,C}$ be two compact invariant attractors generated by the expanding matrix $R\in M_{d}({\mathbb Z})$ and the digit sets $B, C\subset {\mathbb Z}^d$ respectively. Suppose that the digit sets $B,C$ satisfy
\begin{enumerate}
\item $(B-B)\cap (C-C) = \{0\}$ (i.e. $(B, C)$ forms a packing pair),
\item Let $D: = \max\{|d|: d\in (B-B) - (C-C)\}$ and $\frac{D\|R^{-1}\|}{1-\|R^{-1}\|}<1$,  where $|\cdot|$ denotes the standard Euclidean norm and $\|R\|$ denotes the norm of the matrix $R$.
\end{enumerate}
Then $(K_{R,B},K_{R,C})$ forms a packing pair.
\end{proposition}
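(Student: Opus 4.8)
The plan is to translate the set-theoretic claim $(K_{R,B}-K_{R,B})\cap(K_{R,C}-K_{R,C})=\{{\bf 0}\}$ into an arithmetic statement about digits, exploiting the series representation (\ref{e-2-6}). First I would note that, since $\|R^{-1}\|<1$ is implicit in hypothesis (2), every point of $K_{R,B}-K_{R,B}$ can be written as $\sum_{n=1}^{\infty}R^{-n}u_n$ with $u_n\in B-B$, and every point of $K_{R,C}-K_{R,C}$ as $\sum_{n=1}^{\infty}R^{-n}v_n$ with $v_n\in C-C$; since the digit differences are bounded and $\|R^{-1}\|^n$ decays geometrically, these series converge absolutely, so the expansions are legitimate.

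Next, suppose $x$ lies in the intersection and equate its two expansions. Setting $w_n:=u_n-v_n\in (B-B)-(C-C)\subset{\mathbb Z}^d$, I obtain $\sum_{n=1}^{\infty}R^{-n}w_n={\bf 0}$. The core of the argument is to force every $w_n$ to vanish. If not, let $N$ be the least index with $w_N\ne{\bf 0}$; isolating the $N$-th term and applying $R^N$ gives $w_N=-\sum_{k=1}^{\infty}R^{-k}w_{N+k}$, whence, bounding $|w_{N+k}|\le D$,
$$
|w_N|\le D\sum_{k=1}^{\infty}\|R^{-1}\|^k=\frac{D\|R^{-1}\|}{1-\|R^{-1}\|}<1
$$
by hypothesis (2). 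Since $w_N$ is a nonzero \emph{integer} vector, $|w_N|\ge 1$, a contradiction; hence all $w_n={\bf 0}$.

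Finally, $w_n={\bf 0}$ means $u_n=v_n\in(B-B)\cap(C-C)=\{{\bf 0}\}$ by hypothesis (1), so $u_n={\bf 0}$ for every $n$ and thus $x={\bf 0}$. This identifies the origin as the only common point and proves that $(K_{R,B},K_{R,C})$ is a packing pair.

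The step I expect to be the crux is the integrality observation: the whole contradiction rests on $w_N$ being a nonzero \emph{lattice} vector, so that the lower bound $|w_N|\ge 1$ collides with the estimate $|w_N|<1$ coming from (2). One must therefore keep track that $(B-B)-(C-C)\subset{\mathbb Z}^d$ and that $D<\infty$ (true because $B,C$ are finite), and take modest care over the ``least nonzero index'' reduction and the rearrangement of an absolutely convergent series. Hypothesis (1) plays no role in the contradiction itself—it enters only at the last step to convert $u_n=v_n$ into $u_n={\bf 0}$, and indeed without it a nonzero common element such as $R^{-1}u$ with $u\in(B-B)\cap(C-C)$ would survive, showing that (1) is genuinely necessary.
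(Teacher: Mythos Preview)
Your proof is correct and follows essentially the same approach as the paper: write $x$ using the two digit expansions, subtract, and use the bound from hypothesis (2) together with the integrality of the digit differences to force $u_n=v_n$ for all $n$, then invoke hypothesis (1) to conclude $u_n=0$. The only cosmetic difference is that you argue via a ``least nonzero index'' contradiction while the paper handles $n=1$ first and then says ``repeat the procedure''; your observation that hypothesis (1) is needed only at the final step (and not in the integrality contradiction) is a nice clarification.
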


\begin{proof}
We need to show that $(K_{R,B}-K_{R,B})\cap (K_{R,C}-K_{R,C}) = \{0\}$. To prove this, let $x\in(K_{R,B}-K_{R,B})\cap (K_{R,C}-K_{R,C})$. Then using the representation in  (\ref{e-2-6})
$$
x = \sum_{n=1}^{\infty} R^{-n}\epsilon_n = \sum_{n=1}^{\infty} R^{-n}\eta_n,
$$
where $\epsilon_n\in B-B$ and $\eta_n\in C-C$. Hence,
$$
\sum_{n=1}^{\infty} R^{-n}(\epsilon_n-\eta_n) = 0.
$$
We now multiply both side by $R$ and obtain
$$
\epsilon_1 - \eta_1 = \sum_{n=1}^{\infty} R^{-{n}}(\epsilon_{n+1}-\eta_{n+1}).
 $$
Thus,
$$
|\epsilon_1 - \eta_1|\le \sum_{n=1}^{\infty}\|R^{-1}\|^n D = \frac{D\|R^{-1}\|}{1-\|R^{-1}\|}<1.
$$
But $\epsilon_1,\eta_1\in {\mathbb Z}$, we must have $\epsilon_1=\eta_1$. Note that $\epsilon_1 = \eta_1\in (B-B)\cap (C-C)$. By assumption, $\epsilon_1 = \eta_1 = 0$. Thus, $x = \sum_{n=2}^{\infty} R^{-n}\epsilon_n = \sum_{n=2}^{\infty} R^{-n}\eta_n$. We repeat the procedure and we finally obtain $\epsilon_n = \eta_n = 0$ for all $n$. This shows that $x = 0$. Thus, $(K_{R,B},K_{R,C})$ forms a packing pair.

\end{proof}

\begin{example}\label{example3.2}
{ Suppose that $R = N$ with $N\in{\mathbb Z}$ and $N\ge 11$,
$B = \{0,1\}$ and $C = \{0,4\}$. Then $(K_{N,B},K_{N,C})$ forms a packing pair.}
\end{example}

\begin{proof}
With the notations in Proposition \ref{prop3.1}, note that
$$
(B-B)\cap (C-C) = \{-1,0,1\}\cap\{-4,0,4\} = \{0\}.
$$
 Also, $(B-B)-(C-C) = \{-5,-4,-3,-1,0,1,3,4,5\}$ and thus $D = 5$. We have $ \frac{D\|R^{-1}\|}{1-\|R^{-1}\|} = \frac{D/N}{1-D/N} = \frac{5}{N-5}<1$ if $N\ge 11$.

\end{proof}

\vspace{0.3cm}

We now give another useful criterion for packing pair. Let $S$ be an infinite subset of positive integers such that ${\mathbb Z}^+\setminus S$ is also an infinite set, where ${\mathbb Z}^+$ is the set of all positive integers. For a self-affine set $K_{R,B}$ given in (\ref{e-2-6}), we can define
$$
K_S = \left\{\sum_{n\in S}R^{-n}b_n: b_n\in B\right\}, \ K_{{\mathbb Z}^+\setminus S} = \left\{\sum_{n\in {\mathbb Z}^+\setminus S}R^{-n}b_n:b_n\in B\right\}.
$$

\begin{proposition}\label{prop3.2}
Suppose that the IFS generated by $(R,B)$ satisfies the strong separation condition. Then $(K_S,K_{{\mathbb Z}^+\setminus S})$ forms a packing pair.
\end{proposition}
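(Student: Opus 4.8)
The plan is to reduce the packing condition $(K_S-K_S)\cap(K_{{\mathbb Z}^+\setminus S}-K_{{\mathbb Z}^+\setminus S})=\{{\bf 0}^d\}$ to the uniqueness of the radix expansion $\sum_{n=1}^{\infty}R^{-n}b_n$ guaranteed by the strong separation condition, following the template of Proposition \ref{prop3.1} but with the norm estimate there replaced by a cleaner geometric separation argument. First I would record an auxiliary \emph{uniqueness lemma}: since $R$ is expanding, after passing to an adapted norm one has $\|R^{-1}\|<1$, so all the series below converge absolutely; and the SSC forces the coding map $\pi$ from the sequence space $B^{{\mathbb N}}$ to $K_{R,B}$, $\pi((b_n)_n)=\sum_{n=1}^{\infty}R^{-n}b_n$, to be injective. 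Indeed, if two codes first disagree at index $k$, the two points lie in the sets $\tau_{b_1}\circ\cdots\circ\tau_{b_{k-1}}(\tau_{b_k}(K_{R,B}))$ and $\tau_{b_1}\circ\cdots\circ\tau_{b_{k-1}}(\tau_{b_k'}(K_{R,B}))$, which are disjoint because the common prefix $\tau_{b_1}\circ\cdots\circ\tau_{b_{k-1}}$ is injective and $\tau_{b_k}(K_{R,B})\cap\tau_{b_k'}(K_{R,B})=\emptyset$ by SSC. The key consequence I want is: whenever $\sum_{n=1}^{\infty}R^{-n}e_n=0$ with each $e_n\in B-B$, then $e_n=0$ for all $n$. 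To see this, write $e_n=b_n-b_n'$ with $b_n,b_n'\in B$ and apply injectivity of $\pi$ to the two equal points $\sum R^{-n}b_n$ and $\sum R^{-n}b_n'$ to conclude $b_n=b_n'$, i.e. $e_n=0$.

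With this lemma in hand the main argument is short. I would take $x\in(K_S-K_S)\cap(K_{{\mathbb Z}^+\setminus S}-K_{{\mathbb Z}^+\setminus S})$ and, using that a difference of two $K_S$-points is $\sum_{n\in S}R^{-n}c_n$ with $c_n\in B-B$, write
$$
x=\sum_{n\in S}R^{-n}c_n=\sum_{n\in {\mathbb Z}^+\setminus S}R^{-n}d_n,\qquad c_n,d_n\in B-B.
$$
Moving the second sum across and defining $e_n=c_n$ for $n\in S$ and $e_n=-d_n$ for $n\in {\mathbb Z}^+\setminus S$, I obtain a single vanishing expansion
$$
\sum_{n=1}^{\infty}R^{-n}e_n=0,\qquad e_n\in B-B,
$$
where $-d_n\in B-B$ because $B-B$ is symmetric, and the two index sets partition ${\mathbb Z}^+$. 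The uniqueness lemma then forces $e_n=0$ for every $n$; in particular $c_n=0$ for all $n\in S$, so $x=0$. Hence $(K_S-K_S)\cap(K_{{\mathbb Z}^+\setminus S}-K_{{\mathbb Z}^+\setminus S})=\{{\bf 0}^d\}$, which is exactly the packing-pair condition.

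The only genuinely delicate point is the uniqueness lemma, i.e. turning the set-theoretic SSC into injectivity of the coding map and hence into uniqueness of the digit sequence for a vanishing $B-B$ expansion; once that is established, everything downstream is bookkeeping with the symmetric set $B-B$ and the disjoint splitting of ${\mathbb Z}^+$ into $S$ and ${\mathbb Z}^+\setminus S$. Note that, unlike in Proposition \ref{prop3.1}, no arithmetic smallness hypothesis on $D\|R^{-1}\|/(1-\|R^{-1}\|)$ is needed here, since the separation is supplied directly by the SSC rather than by a norm estimate.
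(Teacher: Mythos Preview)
Your proof is correct and follows essentially the same route as the paper. Both arguments rewrite the intersection condition as $\sum_{n=1}^{\infty}R^{-n}b_n=\sum_{n=1}^{\infty}R^{-n}b_n'$ with $b_n,b_n'\in B$ and then invoke the strong separation condition to force $b_n=b_n'$ for all $n$; you simply package the latter step as a separate ``uniqueness lemma'' (injectivity of the coding map), whereas the paper argues it inline by peeling off the first digit via $\tau_{b_1}(K_{R,B})\cap\tau_{b_1'}(K_{R,B})=\emptyset$ and iterating.
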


\begin{proof}
Take $x\in (K_S-K_S)\cap (K_{{\mathbb Z}^+\setminus S}-K_{{\mathbb Z}^+\setminus S})$. Then
$$
x = \sum_{n\in S}R^{-n}(b_n-b_n') = \sum_{n\in {\mathbb Z}^+\setminus S}R^{-n}(b_n'-b_n).
$$
Hence, by a rearrangement, we have
$$
\sum_{n=1}^{\infty}R^{-n}b_n = \sum_{n=1}^{\infty}R^{-n}b_n' \in \tau_{b_1}(K_{R,B})\cap \tau_{b_1'}(K_{R,B}).
$$
We note that if the IFS generated by $(R, B)$ satisfies the strong separation condition, then $\tau_{b_1}(K_{R,B})\cap \tau_{b_1'}(K_{R,B})=\emptyset$ if $b_1\ne b_1'$. Hence, $b_1 = b_1'$. We repeat the proof and we obtain $b_n = b_n'$ for all $n$. Therefore, $x = 0$. This shows $(K_S,K_{{\mathbb Z}^+\setminus S})$ forms a packing pair.
\end{proof}

Combining the above propositions and Theorem \ref{th2.1}. We obtain the following theorem.

\begin{theorem}\label{theorem3.1}
Let $\nu$ and $\lambda$ are two finite Borel measures and define $\mu = \nu\ast\lambda$. Suppose that one of the following is satisfied:
\begin{enumerate}
\item if  $K_{\nu} = K_{R,B}$ and $K_{\lambda} = K_{R,C}$, where $R,B,C$ satisfies the assumption in Proposition \ref{prop3.1},  or
\item if  $K_{\nu} = K_{S}$ and $K_{\lambda} = K_{{\mathbb Z}^+\setminus S}$, where
 the IFS generated by $(R,B)$ satisfies the strong separation condition and $S$ is an infinite subset of integers with ${\mathbb Z}^+\setminus S$ is also an infinite set.
\end{enumerate}
Then for any $t\in{\mathbb R}^d$,  the measure $\mu + \delta_t\ast \nu$
is translationally singular and it does not admit any frame measures or Fourier frames.
\end{theorem}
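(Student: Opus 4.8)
The plan is to recognize this theorem as a direct consolidation of the two packing-pair criteria with the main obstruction result, Theorem \ref{th2.1}, so that essentially no new argument is needed beyond checking that the hypotheses of that theorem are met in each of the two cases. First I would reduce everything to a statement about the supports. In case (1), the hypotheses imposed on $R$, $B$, $C$ are precisely those of Proposition \ref{prop3.1}, so that proposition yields that $(K_{R,B},K_{R,C}) = (K_{\nu},K_{\lambda})$ forms a packing pair. In case (2), the strong separation condition on $(R,B)$ together with the choice of $S$ places us exactly under the hypotheses of Proposition \ref{prop3.2}, which yields that $(K_S,K_{{\mathbb Z}^+\setminus S}) = (K_{\nu},K_{\lambda})$ forms a packing pair. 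In either case, by the definition of a packing pair for measures, $(\nu,\lambda)$ is a packing pair.

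Next, having established that $(\nu,\lambda)$ is a packing pair, I would invoke Theorem \ref{th2.1} directly: it asserts precisely that for such a pair, with $\mu = \nu\ast\lambda$, the measure $\rho_t = \mu + \delta_t\ast\nu$ is translationally singular and admits no frame measure, for every $t\in{\mathbb R}^d$. This delivers both the translational singularity and the nonexistence of frame measures claimed in the conclusion, with no additional work.

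Finally, to upgrade ``no frame measure'' to ``no Fourier frame,'' I would use the observation recorded just after (\ref{eqframemeasure}): if $E(\Lambda) = \{e^{2\pi i\langle\lambda,x\rangle}\}_{\lambda\in\Lambda}$ were a Fourier frame for $L^2(\rho_t)$, then the counting measure $\Gamma = \sum_{\lambda\in\Lambda}\delta_{\lambda}$ would be a frame measure for $\rho_t$ with the same bounds, contradicting what was just proved. Hence $\rho_t$ admits no Fourier frame either.

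The only point requiring care, rather than a genuine obstacle, is aligning the standing hypotheses: Theorem \ref{th2.1} requires $\nu$ and $\lambda$ to have no atoms, whereas the two packing-pair propositions concern only the supports $K_{\nu},K_{\lambda}$ and say nothing about the measures themselves. I would therefore make explicit that $\nu$ and $\lambda$ are assumed atomless (for instance by taking them to be the equal-weight self-affine measures $\mu_{R,B}$ and $\mu_{R,C}$, which are automatically without atoms since the digit sets generate uncountable attractors under the separation assumptions). Once this is noted, the theorem follows immediately from Propositions \ref{prop3.1}--\ref{prop3.2} and Theorem \ref{th2.1} without any further computation.
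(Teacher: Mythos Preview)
Your proposal is correct and matches the paper's approach exactly: the paper gives no separate proof of this theorem but simply prefaces it with ``Combining the above propositions and Theorem \ref{th2.1}, we obtain the following theorem,'' which is precisely the consolidation you describe. Your observation about the atomlessness hypothesis is also apt---the paper's statement omits it even though Theorem \ref{th2.1} requires it, so your care in noting this alignment is well placed.
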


\vspace{0.3cm}
\subsection{A closer look to $\mu_4$ and $\mu_{16}$.} Self-affine measures with proposition \ref{prop3.1} can be found easily. In particular, we are interested in the $\mu_4$ and $\mu_{16}$ example given in the introduction. As $\mu_4$ is a self-similar measure, we can write it as
$$
\begin{aligned}
\mu_4 =& \delta_{4^{-1}\{0,1\}}\ast\delta_{4^{-2}\{0,1\}}\ast....\\
=& (\delta_{4^{-2}\{0,1\}}\ast \delta_{4^{-4}\{0,1\}}....) \ast (\delta_{4^{-1}\{0,1\}}\ast \delta_{4^{-3}\{0,1\}}....)\\
=& \mu_{16}\ast  (\delta_{4^{-2}\{0,4\}}\ast \delta_{4^{-4}\{0,4\}}....)\\
=& \mu_{16}\ast \lambda_{16}
\end{aligned}
$$
where $\lambda_{16}$ is the self-similar measure generated by the maps $\frac{1}{16}x$ and $\frac{1}{16}(x+4)$.

\vspace{0.3cm}

\begin{proposition}\label{prop3.5}
The support of $\mu_{16}$ and $\lambda_{16}$ forms a packing pair and the measure $\mu_4+\delta_x\ast\mu_{16}$ is translationally singular and does not admit any frame measures.
\end{proposition}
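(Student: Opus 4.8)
The plan is to read this proposition as an immediate corollary of the machinery already built up: Proposition~\ref{prop3.1} will supply the packing pair, and Theorem~\ref{th2.1} will then deliver both the translational singularity and the nonexistence of frame measures. First I would invoke the factorization $\mu_4 = \mu_{16}\ast\lambda_{16}$ carried out in the paragraph preceding the statement, so as to fit the hypotheses of Theorem~\ref{th2.1} with $\nu = \mu_{16}$, $\lambda = \lambda_{16}$ and $\mu = \mu_4$. Here $\mu_{16}$ is the self-similar measure for the IFS with ratio $1/16$ and digit set $B=\{0,1\}$, while $\lambda_{16}$ is the one with ratio $1/16$ and digit set $C=\{0,4\}$, so that $K_{\mu_{16}} = K_{R,B}$ and $K_{\lambda_{16}} = K_{R,C}$ with $R=16$.

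For the packing-pair step I would apply Proposition~\ref{prop3.1} and verify its two numerical hypotheses directly: $(B-B)\cap(C-C) = \{-1,0,1\}\cap\{-4,0,4\} = \{0\}$, and, since $(B-B)-(C-C) = \{-5,-4,-3,-1,0,1,3,4,5\}$ forces $D = 5$ while $\|R^{-1}\| = 1/16$, one computes $\frac{D\|R^{-1}\|}{1-\|R^{-1}\|} = \frac{5/16}{15/16} = \frac{1}{3} < 1$. This is precisely the case $N=16$ of Example~\ref{example3.2}, whence $(K_{\mu_{16}},K_{\lambda_{16}})$ is a packing pair and therefore $(\mu_{16},\lambda_{16})$ is a packing pair in the sense of the definition of Section~2. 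Before applying Theorem~\ref{th2.1} I must also record the one standing hypothesis not covered by the packing criterion, namely that $\mu_{16}$ and $\lambda_{16}$ are atomless: each arises from a two-map IFS of ratio $1/16$ with distinct fixed points satisfying the strong separation condition, so that the mass of any single point is bounded by the cylinder mass $2^{-n}$ for every $n$ and hence vanishes, giving continuity of both measures.

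With the packing pair established and both measures atomless, Theorem~\ref{th2.1} applies verbatim with $\rho_t = \mu + \delta_t\ast\nu = \mu_4 + \delta_t\ast\mu_{16}$, yielding that $\mu_4 + \delta_x\ast\mu_{16}$ is translationally singular and admits no frame measure; the absence of a Fourier frame follows since any Fourier frame $E(\Lambda)$ would give the frame measure $\Gamma = \sum_{\lambda\in\Lambda}\delta_\lambda$. Since every step is a verification against previously proved statements, there is no genuine obstacle here; the only points requiring a moment's care are confirming that the factorization of $\mu_4$ correctly reads off the digit sets $B=\{0,1\}$ and $C=\{0,4\}$, so that Proposition~\ref{prop3.1} is applicable, and checking the atomless hypothesis — used implicitly throughout Section~2 — for $\lambda_{16}$ as well as for $\mu_{16}$.
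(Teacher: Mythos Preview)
Your proposal is correct and follows essentially the same route as the paper: identify the supports of $\mu_{16}$ and $\lambda_{16}$ as $K_{16,\{0,1\}}$ and $K_{16,\{0,4\}}$, invoke Example~\ref{example3.2} (i.e.\ Proposition~\ref{prop3.1} with $N=16$) to obtain the packing pair, and then apply the main theorem. The only cosmetic difference is that the paper cites Theorem~\ref{theorem3.1} (the packaged corollary) rather than Theorem~\ref{th2.1} directly, and does not pause to record the atomlessness of $\mu_{16}$ and $\lambda_{16}$; your more explicit bookkeeping is fine.
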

\begin{proof}
It is known that the supports of $\mu_{16}$ and $\lambda_{16}$, denoted respectively by $K_{\mu_{16}}$ and $K_{\lambda_{16}}$, are generated by the pairs $(16,\{0, 1\})$ and  $(16,\{0, 4\})$ respectively. It follows from Example \ref{example3.2} that $(K_{\mu_{16}},K_{\lambda_{16}})$ forms a packing pair and thus $(\mu_{16},\lambda_{16})$ forms a packing pair. Using Theorem \ref{theorem3.1} and note that $\mu_4 =\mu_{16}\ast \lambda_{16}$, our second statement follows.
\end{proof}

\section{Mixed type measures with a Fourier frame}
In this section, we generalize the construction of Lev in Theorem \ref{t-1-3} to non-orthogonal subspace and prove Theorem \ref{t-1-4}.

\vspace{0.3cm}

Let $\mu$ and $\nu$ be positive and finite continuous Borel measures supported on ${\mathbb R}^m$ and ${\mathbb R}^{d-m}$ respectively. Suppose that they are frame spectral measures on their respective spaces. Let $V$ be a subspace of dimension $d$ and let $T: {\mathbb R}^d \rightarrow {\mathbb R}^d$ be the invertible linear transformation such that
\begin{eqnarray*}
T(\{{\bf 0}^{m}\} \times {\mathbb R}^{d-m}) = V.
\end{eqnarray*}
Define $\nu_T(E) = (\delta_{{\bf 0}^m}\times\nu)(T^{-1}(E))$. Our goal is to show that $\mu\times \delta_{{\bf 0}^{d-m}}+\nu_T$ is frame-spectral if $V\cap ({\mathbb R}^m\times \{{\bf 0}^{d-m}\}) = \{{\bf 0}^d\}$. We first need the following two lemmas from linear algebra to characterize the condition $V\cap ({\mathbb R}^m\times \{{\bf 0}^{d-m}\}) = \{{\bf 0}^d\}$.

\vspace{0.3cm}

\begin{lemma}\label{t-3-1}
Let ${\bf e}_j = (0, \dotsc, 1,  \dotsc, 0)\in {\mathbb R}^d$ with the $j$-th entry is $1$ and the others are $0$. $V\cap ({\mathbb R}^m\times \{{\bf 0}^{d-m}\}) = \{{\bf 0}^{d}\}$ if and only if $\{{\bf e}_1, {\bf e}_2, \dotsc, {\bf e}_m, T({\bf e}_{m+1}), \dotsc, T({\bf e}_d)\}$ is linearly independent.
\end{lemma}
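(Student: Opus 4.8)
The plan is to reduce the statement to a direct-sum / dimension-counting argument about two complementary subspaces. Write $W = {\mathbb R}^m\times\{{\bf 0}^{d-m}\}$ and recall that $V = T(\{{\bf 0}^m\}\times{\mathbb R}^{d-m})$. The first observation is that $\{{\bf e}_1,\dots,{\bf e}_m\}$ is a basis of $W$, while $\{{\bf e}_{m+1},\dots,{\bf e}_d\}$ is a basis of $\{{\bf 0}^m\}\times{\mathbb R}^{d-m}$. Since $T$ is invertible, it carries a basis to a basis, so $\{T({\bf e}_{m+1}),\dots,T({\bf e}_d)\}$ is a basis of $V$; in particular $\dim V = d-m$.

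Next I would note that the displayed list $\{{\bf e}_1,\dots,{\bf e}_m, T({\bf e}_{m+1}),\dots,T({\bf e}_d)\}$ is obtained by concatenating a spanning set of $W$ with a spanning set of $V$, so its span is exactly $W+V$. Because this is a list of precisely $d$ vectors in the $d$-dimensional space ${\mathbb R}^d$, linear independence of the list is equivalent to its spanning all of ${\mathbb R}^d$, that is, to $W+V={\mathbb R}^d$.

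Finally I would close the loop with the dimension formula $\dim(W+V)=\dim W+\dim V-\dim(W\cap V)=m+(d-m)-\dim(W\cap V)=d-\dim(W\cap V)$. Hence $W+V={\mathbb R}^d$ holds if and only if $\dim(W\cap V)=0$, i.e. if and only if $V\cap W=\{{\bf 0}^d\}$, which is exactly the hypothesis $V\cap({\mathbb R}^m\times\{{\bf 0}^{d-m}\})=\{{\bf 0}^d\}$. Chaining the two equivalences established in the previous paragraph with this one yields the claim in both directions.

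There is no serious obstacle here; the statement is a clean exercise in linear algebra. The only point requiring a little care is the observation that for a family of exactly $d$ vectors in ${\mathbb R}^d$ the notions ``linearly independent'', ``spanning'', and ``a basis'' all coincide, combined with the bookkeeping that $\dim V=d-m$ so that the two dimensions sum to $d$. I would state this equivalence explicitly rather than leave it implicit, since it is precisely what converts the linear-independence condition into a statement about the subspace sum $V+W$ and hence about the intersection $V\cap W$.
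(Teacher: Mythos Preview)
Your argument is correct. It differs in flavor from the paper's proof, though both are elementary linear algebra.

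The paper argues each direction by element-chasing: given a vanishing linear combination $\sum_{i\le m}\alpha_i{\bf e}_i+\sum_{j>m}\beta_jT({\bf e}_j)={\bf 0}$, the $W$-part equals the negative of the $V$-part, hence lies in $V\cap W$; if the intersection is trivial this forces both parts to vanish separately, and then independence of each sublist kills the coefficients. The converse runs the same identity backwards. Your route instead packages both directions at once via the dimension formula $\dim(W+V)=\dim W+\dim V-\dim(W\cap V)$, together with the fact that for $d$ vectors in ${\mathbb R}^d$ independence and spanning coincide. The paper's approach is more hands-on and avoids invoking the dimension formula; yours is a shade more conceptual and handles the biconditional in a single stroke. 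Either is entirely adequate for this lemma.
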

\begin{proof} Suppose that $V\cap ({\mathbb R}^m\times \{{\bf 0}^{d-m}\}) = \{{\bf 0}^d\}$.  Note that $T({\bf e}_j) \in V$ for all $j = m+1, \dotsc, d$. Thus if
$\sum\limits_{i=1}^m \alpha_i {\bf e}_i + \sum\limits_{j= m+1}^d \beta_j (T{\bf e}_j) = {{\bf 0}^d}$, then
\begin{eqnarray*}
\sum\limits_{i=1}^m \alpha_i {\bf e}_i = -\sum\limits_{j=m+1}^d \beta_j (T{\bf e}_j) \in V\cap ({\mathbb R}^m\times\{{\bf 0}^{d-m}\}).
\end{eqnarray*}
Therefore,
$$\sum\limits_{i=1}^m \alpha_i {\bf e}_i = {\bf 0}^d = - \sum\limits_{j=m+1}^d \beta_j (T{\bf e}_j).$$
 As $\{{\bf e}_1, \dotsc, {\bf e}_m\}$ is linearly independent and $\{T({\bf e}_{m+1}), \dotsc, T({\bf e}_d)\}$ is also linearly independent (since $T$ is invertible), we have $\alpha_i = \beta_j = 0$ for any $1\le i\le m$ and $m+1\le j\le d$.

\vspace{0.3cm}

Conversely, we take ${\bf x} \in V\cap ({\mathbb R}^m\times\{{\bf 0}^{d-m}\})$. Then
${\bf x} = \sum\limits_{i=1}^m \alpha_i {\bf e}_i = \sum\limits_{j=m+1}^d \beta_j (T{\bf e}_j)$. This implies that
\begin{eqnarray*}
\sum\limits_{i=1}^m \alpha_i {\bf e}_i - \sum\limits_{j=m+1}^d \beta_j (T{\bf e}_j) = {\bf 0}^d.
 \end{eqnarray*}
 Thus $\alpha_i = \beta_j = 0 $ for any $1\le i\le m$ and $m+1\le j\le d$ by the assumption. Hence, ${\bf x} = {\bf 0}^d.$
\end{proof}

Now we represent $T$ in its matrix representation:
$$
T = \begin{bmatrix} T{\bf e}_1 \ T{\bf e}_2 \ \dotsm \ T{\bf e}_d\end{bmatrix}  = \quad \left[
                                                                                \begin{array}{ccc}
                                                                                  A_1 & | & A_2 \\
                                                                                  -- & | & -- \\
                                                                                  A_3 & | & A_4 \\
                                                                                \end{array}
                                                                              \right]
$$
 where $A_1$ is an $m\times m$ matrix and $A_4$ is a $(d-m)\times (d-m)$ matrix.

\vspace{0.3cm}

\begin{lemma}\label{t-3-2}
$V\cap ({\mathbb R}^m\times \{{\bf 0}^{d-m}\}) = \{{\bf 0}^d\}$ if and only if $A_4$ is invertible.
\end{lemma}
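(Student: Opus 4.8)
The plan is to reduce the statement to a one-line determinant computation by invoking the characterization already obtained in Lemma \ref{t-3-1}. That lemma says that $V\cap({\mathbb R}^m\times\{{\bf 0}^{d-m}\}) = \{{\bf 0}^d\}$ is equivalent to the linear independence of the family $\{{\bf e}_1,\dotsc,{\bf e}_m,\,T{\bf e}_{m+1},\dotsc,T{\bf e}_d\}$. Hence it suffices to decide exactly when these $d$ vectors are independent, i.e.\ when the $d\times d$ matrix $M$ having them as columns is invertible.

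First I would write $M$ explicitly. Since ${\bf e}_1,\dotsc,{\bf e}_m$ are the first $m$ standard basis vectors and $T{\bf e}_{m+1},\dotsc,T{\bf e}_d$ are precisely the last $d-m$ columns of $T$, namely the block $\left[\begin{smallmatrix}A_2\\A_4\end{smallmatrix}\right]$, the matrix takes the block-triangular form
$$
M = \begin{bmatrix} I_m & A_2 \\ 0 & A_4 \end{bmatrix}.
$$
Because $M$ is block upper triangular, its determinant factors as $\det M = \det(I_m)\det(A_4) = \det(A_4)$. Therefore $M$ is invertible if and only if $\det A_4\neq 0$, that is, if and only if $A_4$ is invertible, and chaining this with Lemma \ref{t-3-1} yields the desired equivalence.

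As an alternative I could argue directly without passing through Lemma \ref{t-3-1}: a general element of $V$ has the form $T({\bf 0}^m,w) = (A_2 w,\,A_4 w)$ with $w\in{\mathbb R}^{d-m}$, and it lies in ${\mathbb R}^m\times\{{\bf 0}^{d-m}\}$ exactly when $A_4 w = {\bf 0}^{d-m}$. If $A_4$ is invertible this forces $w={\bf 0}$, so the intersection is trivial; conversely, if $A_4 w={\bf 0}$ for some $w\neq{\bf 0}$, then invertibility of $T$ gives $T({\bf 0}^m,w)\neq{\bf 0}^d$, hence $A_2 w\neq{\bf 0}$, and $(A_2 w,\,{\bf 0}^{d-m})$ is a nonzero element of the intersection.

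I expect no genuine obstacle here, as the statement is elementary linear algebra. The only point requiring care is the identification of the block structure of $M$ (equivalently, the observation that the relevant kernel condition is governed exactly by the bottom-right block $A_4$), together with the use of the invertibility of $T$ in the direct route above; in the determinant route this dependence is already absorbed into Lemma \ref{t-3-1}.
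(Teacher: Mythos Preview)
Your main argument is correct and essentially identical to the paper's: both invoke Lemma \ref{t-3-1}, write the matrix with columns ${\bf e}_1,\dotsc,{\bf e}_m,T{\bf e}_{m+1},\dotsc,T{\bf e}_d$ in block upper-triangular form, and read off $\det A_4$. Your alternative direct route (bypassing Lemma \ref{t-3-1} via $T({\bf 0}^m,w)=(A_2w,A_4w)$) is also valid and does not appear in the paper.
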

\begin{proof}
From Lemma \ref{t-3-1},  $V\cap ({\mathbb R}^m\times \{{\bf 0}^{d-m}\}) = \{{\bf 0}^d\}$ happens if and only if
$$
\{{\bf e}_1, {\bf e}_2, \dotsc, {\bf e}_m, T({\bf e}_{m+1}), \dotsc, T({\bf e}_d)\}
$$
is linearly independent. Hence, this is equivalent to
\begin{eqnarray*}
0&\ne& \det \begin{bmatrix}{\bf e}_1 \ {\bf e}_2 \ \dotsc \ {\bf e}_m \ T({\bf e}_{m+1}) \ \dotsc \ T({\bf e}_d)\end{bmatrix}\quad\\
&=& \det \left[
                                                                                \begin{array}{ccc}
                                                                                  I & | & A_2 \\
                                                                                  -- & | & -- \\
                                                                                  O & | & A_4 \\
                                                                                \end{array}
                                                                              \right] = \det A_4,
\end{eqnarray*}
which is equivalent to $A_4$ is invertible. The lemma follows.
\end{proof}

\vspace{0.3cm}

We can now prove our Theorem \ref{t-1-4}, which is restated as follows:
\vspace{0.3cm}

\begin{theorem}\label{t-3-3}
Suppose that $V$ is a subspace of ${\mathbb R}^d$ such that $V\cap ({\mathbb R}^m\times \{{\bf 0}^{d-m}\}) = \{{\bf 0}^d\}$ and the linear transformation $T$ which maps $(\{{\bf 0}^{m}\} \times {\mathbb R}^{d-m})$ onto $ V$ is invertible. Then
$$
\rho_T: = \mu \times \delta_{{\bf 0}^m} + \nu_T
$$
 is a frame-spectral measure. Furthermore, there exists $0<A\le B<\infty$ such that  there exists a frame spectrum $\Lambda_T$ for $\rho_T$ with frame bounds $A,B$.
\end{theorem}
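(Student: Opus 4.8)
The plan is to reduce $\rho_T$ to Lev's orthogonal configuration of Theorem \ref{t-1-3} by a single invertible linear change of variables and then transport the resulting Fourier frame back. Writing $T$ in the block form of Lemma \ref{t-3-2}, the hypothesis $V\cap({\mathbb R}^m\times\{{\bf 0}^{d-m}\})=\{{\bf 0}^d\}$ guarantees (by Lemma \ref{t-3-2}) that the $(d-m)\times(d-m)$ block $A_4$ is invertible. This invertibility is exactly what I need to build the shearing map
$$
L=\left[\begin{array}{ccc} I & | & -A_2A_4^{-1}\\ -- & | & --\\ O & | & A_4^{-1}\end{array}\right],
$$
which is invertible (its determinant equals $\det(A_4^{-1})\ne 0$), fixes the subspace ${\mathbb R}^m\times\{{\bf 0}^{d-m}\}$ pointwise, and carries $V=T(\{{\bf 0}^m\}\times{\mathbb R}^{d-m})$ onto $\{{\bf 0}^m\}\times{\mathbb R}^{d-m}$, since $L$ sends the column generators $\left[\begin{smallmatrix}A_2\\A_4\end{smallmatrix}\right]$ of $V$ to $\left[\begin{smallmatrix}O\\I\end{smallmatrix}\right]$.

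Next I would show that the pushforward of $\rho_T$ under $L$ is precisely Lev's measure $\rho=\mu\times\delta_{{\bf 0}^{d-m}}+\delta_{{\bf 0}^m}\times\nu$. Because $L$ acts as the identity on ${\mathbb R}^m\times\{{\bf 0}^{d-m}\}$, which is the support of $\mu\times\delta_{{\bf 0}^{d-m}}$, this horizontal component is left unchanged. For the other component, $\nu_T=T_{\#}(\delta_{{\bf 0}^m}\times\nu)$, a direct block multiplication gives $LT=\left[\begin{smallmatrix} A_1-A_2A_4^{-1}A_3 & O\\ A_4^{-1}A_3 & I\end{smallmatrix}\right]$, so $LT$ restricts to the identity on $\{{\bf 0}^m\}\times{\mathbb R}^{d-m}$ and hence $L_{\#}\nu_T=\delta_{{\bf 0}^m}\times\nu$. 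Therefore $L_{\#}\rho_T=\rho$. Since $\mu$ and $\nu$ are continuous frame-spectral measures, Theorem \ref{t-1-3} supplies a frame spectrum $\Lambda$ for $\rho$ with some frame bounds $0<A\le B<\infty$; crucially, $\rho$, $\Lambda$, $A$ and $B$ do not depend on $T$ in any way.

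Finally I would transport the frame back through the adjoint of $L$. For exponentials one has $e_\lambda\circ L=e_{L^{t}\lambda}$, so for any $g\in L^2(\rho_T)$ the change of variables $y=Lx$ yields
$$
\int g(x)\,e^{-2\pi i\langle L^{t}\lambda,x\rangle}\,d\rho_T(x)=\int (g\circ L^{-1})(y)\,e^{-2\pi i\langle\lambda,y\rangle}\,d\rho(y).
$$
As $\rho=L_{\#}\rho_T$, the map $g\mapsto g\circ L^{-1}$ is a unitary from $L^2(\rho_T)$ onto $L^2(\rho)$, so $\|g\|_{L^2(\rho_T)}=\|g\circ L^{-1}\|_{L^2(\rho)}$. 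Summing the squared moduli over $\lambda\in\Lambda$ and invoking the frame inequalities for $\rho$ then shows that $\Lambda_T:=L^{t}\Lambda=\{L^{t}\lambda:\lambda\in\Lambda\}$ is a frame spectrum for $\rho_T$ with the same bounds $A,B$. Because these bounds are inherited from the fixed measure $\rho$, they are automatically independent of $T$, which is exactly the uniformity asserted in the ``moreover'' part.

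I expect the main (if mild) obstacle to be the bookkeeping verifying $L_{\#}\rho_T=\rho$: one must confirm both that $L$ genuinely fixes the horizontal component and that the composite $LT$ collapses to the identity on the vertical subspace, using the explicit block forms of $L$ and $T$ together with the invertibility of $A_4$. Once that identity is established, the frame-transport step is a routine unitary conjugation argument and the $T$-independence of $A$ and $B$ follows immediately.
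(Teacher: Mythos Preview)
Your proof is correct and follows the same core strategy as the paper: reduce $\rho_T$ to a Lev-type orthogonal configuration by an invertible linear change of variables, then transport the frame back via the transpose. The one notable difference is in the choice of map. The paper uses only the shear $(x,y)\mapsto(x+A_2A_4^{-1}y,\,y)$, which lands on the measure $\mu\times\delta_{{\bf 0}^{d-m}}+\delta_{{\bf 0}^m}\times(\nu A_4^{-1})$; it must then argue separately that a frame spectrum $(A_4^t)^{-1}\Lambda_\nu$ for $\nu A_4^{-1}$ has the same frame bounds as $\Lambda_\nu$ does for $\nu$, and invoke the specifics of Lev's construction to conclude $T$-independence of the bounds. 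Your map $L$ incorporates the $A_4^{-1}$ rescaling as well, so that $L_{\#}\rho_T$ is exactly the fixed measure $\rho=\mu\times\delta_{{\bf 0}^{d-m}}+\delta_{{\bf 0}^m}\times\nu$, independent of $T$; the uniformity of the frame bounds then follows immediately without any appeal to how Lev's frame is built. This is a slightly cleaner packaging of the same argument.
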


\begin{proof}
For $f\in L^2(\rho_T)$, let $g(x,y) = f(x+A_2A_4^{-1}y,y)$ which is well-defined since $A_4$ is invertible from Lemma \ref{t-3-2}.  $f$ can be decomposed as
\begin{eqnarray*}
f(x,y) d\rho_T(x,y) &=& f(x,0) \ d \mu(x) + f(T\begin{pmatrix}
0 \\ y \end{pmatrix}) \ d\nu(y)\\
&=& f(x,0) \ d \mu(x) + f(A_2y,A_4y) \ d\nu(y)\\
&=& f(x,0) \ d \mu(x) + f(A_2A_4^{-1}y,y) \ d(\nu A_4^{-1})(y)\\
&=& g(x,0) \ d \mu(x) + g(0,y) \ d(\nu A_4^{-1})(y) = g(x,y)d\rho(x,y)
\end{eqnarray*}
where $\nu A_4^{-1}(E): = \nu(A_4^{-1}(E))$ for any Borel sets $E$ and
$$
 \rho:=\mu \times \delta_{{\bf 0}^m} + \delta_{{\bf 0}^{d-m}}\times(\nu A_4^{-1}).
$$
In view of the above, $f\in L^2(\rho_T)$ if and only if $g\in L^2(\rho)$. As $A_4$ is invertible, by a change of variable,  $\Lambda_{\nu}$ is a frame spectrum for $\nu$ with frame bound $A,B$ if and only if $(A_4^{t})^{-1}\Lambda_{\nu}$ is a frame spectrum for $\nu A_4^{-1}$  with frame bound $A,B$ . Using Theorem \ref{t-1-3}, the measure $\rho$ is a  frame-spectral measure. Therefore, $\rho_T$ is a frame spectral measure.   Moreover, according to Lev's construction, the frame bound for the frame spectrum of $\rho$ depends only on the frame bounds of $\mu$ and $\nu A_4^{-1}$. But the frame bounds of the frame spectrum $(A_4)^{-1}\Lambda_{\nu}$ for $\nu A_4^{-1}$ is independent of $A_4$. We have that the frame spectrum we constructed for $\rho_T$ is independent of $T$. This completes the whole proof.
 \end{proof}

Recall that a frame-spectral measure must be either purely discrete, purely absolutely continuous or purely singularly continuous with respect to Lebesgue measure \cite{HLL}.  We note that the assumption that $\mu$ and $\nu$ are continuous measures and $V\cap ({\mathbb R}^m\times \{{\bf 0}^{d-m}\}) = \{{\bf 0}^d\}$ cannot be removed from Theorem \ref{t-3-3}, it can be proved using the pure-type theorem stated.

\begin{example}\label{example4}
We denote by ${\mathcal L}^{s}_A$ the $s$-dimensional Lebesgue measure supported on $A$.
\begin{enumerate}
\item {\rm Suppose that $\mu = \delta_{(0,1)}$ and $\nu  = {\mathcal L}^1_{[1,2]\times\{0\}}$. Then $\mu+\nu$ does not admit any Fourier frames since $\mu+\nu$ is not of pure type (\cite{HLL}). This shows that the condition $\mu$ and $\nu$ are continuous measures cannot be removed}

\vspace{0.3cm}

\item {\rm For the condition $V\cap ({\mathbb R}^m\times \{{\bf 0}^{d-m}\}) = \{{\bf 0}^d\}$, one may think $\mu_4+\mu_{16}$ is a counterexample. On the other hand, if we consider $\mu = \mu_1+\mu_2$, where $\mu_1 = {\mathcal L}^1_{\{0\}\times[0,1]\times\{0\}}$ and $\mu_2 = {\mathcal L}^1_{\{0\}\times\{0\}\times[0,1]}$, and $\nu={\mathcal L}^2_{[0,1]^2\times \{0\}}$. Then $\mu$ is supported on the $yz$-plane and $\nu$ is supported on the $xy$-plane. Furthermore, $\mu$,$\nu$ admit a Fourier frame ($\mu$ admits a Fourier frame by Theorem \ref{t-3-3}). }

\medskip

     {\rm However, $\mu+\nu$ does not admit any Fourier frame on ${\mathbb R}^3$. To see this, any Fourier frame $\Lambda$ of $\mu+\nu$ must be a Fourier frame for $\rho: =\mu_1+\nu$. However, $\rho$ is a measure of $xy$-plane, so the projection $P(\Lambda)$ of $\Lambda$ onto the $xy$-plane must be a Fourier frame for $\rho$. Regarding $P(\Lambda)$ as a subset of ${\mathbb R}^2$ and $\rho$ as a measure on ${\mathbb R}^2$, we obtain a contradiction since $\rho$ is a sum of singular measure and an absolutely continuous measure of ${\mathbb R}^2$ and it does not admit any Fourier frames according to the pure-type principle.}

\medskip

\item{\rm We note also that $\mu$ $\nu$ are respectively  the measure ${\mathcal L}^2_{[0,1]^2\times \{0\}}$ and ${\mathcal L}^2_{\{0\}\times [0,1]^2}$, then $\mu+\nu$ admits a Fourier frame (\cite[Theorem 2.2]{L16}).  We can also see it easily as follows:}
$$
\mu+\nu = \left({\mathcal L}^1_{[0,1]\times\{0\}\times\{0\}}+{\mathcal L}^1_{\{0\}\times\{0\}\times[0,1]}\right) \times {\mathcal L}^1_{\{0\}\times[0,1]\times\{0\}}.
$$
{\rm All measures on the right admit a Fourier frame and the cross-product of two frame-spectral measures admits a Fourier frame, so $\mu+\nu$ is frame-spectral.}

 \end{enumerate}
 \end{example}

\section{Remarks}

We conclude with some remarks on the existence of Fourier frames for a sum of singular measures for further study. A  question  that arises naturally is that

\medskip

\noindent{\bf (Qu):} Suppose that $\mu$ and $\nu$ are frame-spectral measures on ${\mathbb R}^d$. When is the measure $\mu+\nu$ frame-spectral? Is translational absolute continuity of $\mu+\nu$  necessary?
\medskip

We  notice that the first two measures in Example \ref{example4} are translationally singular, while the third one is translationally absolutely continuous. Also, the measures in Theorem \ref{t-3-3} are translationally absolutely continuous if $\mu$ and $\nu$ are. It is plausible that translational absolute continuity of $\mu+\nu$ is necessary, which also confirms the conjecture \ref{conj1}. In \cite{L16}, Lev also suggested some examples such that $\mu+\nu$ is non-frame-spectral, while $\mu,\nu$ is frame-spectral.  Perhaps a specific example that can be asked is as follows:

%
%
%
%
%

\vspace{0.3cm}

\noindent{\bf {(Qu)}:} Let $\mu_4$ and $\mu_8$ denote the one-fourth Cantor measure and one-eighth Cantor measure respectively. Is $\mu_4 + \mu_8$ a frame-spectral measure and translationally absolutely continuous?

\vspace{0.3cm}

It looks like translational absolute continuity is related to the Hausdorff dimension of $K_{\mu_4}\cap (K_{\mu_8}+t)$. A recent study about the Hausdorff dimension of the intersection of translation of Cantor sets can be found in \cite{PP, FHR14} and the references therein. Concerning its frame-spectrality, Lev's paper \cite{L16} suggested a possible construction of frame spectrum of $\mu+\nu$, which requires to find a frame spectrum  for $\mu$ forming a Bessel sequence of $L^2(\nu)$ and vice versa for a frame spectrum of $\nu$. We do not know how these spectra can be found.  However, we have checked through computer programming putting orthonormal sequences of $\mu_8$ into $\mu_4$ as well as checking  the frame-spectral Cantor-type measure in \cite{LW} with their contraction scale very close to each other, we found all frame spectra of one of the measures appear to fail to be a Bessel sequence of the other. Thus, we conjecture that $\mu_4+\mu_8$ is not frame-spectral.

\medskip

 As the end of the paper, we provide the following example that demonstrates  a frame-spectral measure can be translational singular on the closed support, but it is translational absolutely continuous on some Borel supports, which shows that Borel support is necessary as in Remark \ref{remark} (3).

\begin{example}  \label{example5}
{\rm Let $A$ be a Borel set of positive Lebesgue measure  inside $[0,1]$ with the property that for any interval $I\subset [0,1]$, $0<{\mathcal L}^1(I\cap A)<{\mathcal L}^1(I)$ (such set exists and can be found in \cite{Rud}). Let also $B = [0,1]\setminus A$. Then $B$ also satisfies the property. Let $\Omega = A\cup (B+1)$ and $\mu = {\bf 1}_{\Omega}dx$. Then $\mu$ is a frame-spectral measure, $\mu$ is translationally absolutely continuous on $\Omega$, but is translationally singular on the closed support of $\mu$.  }
\end{example}

\begin{proof}
Since $\Omega$ is a fundamental domain of ${\mathbb Z}$, so $\Omega$ is a spectral set and $\mu$ is a spectral measure. It is also translationally absolutely continuous on $\Omega$. If we take $E\subset{\Omega}$ and $\mu(E)>0$, then if $\mu(F) =0$,
$$
\begin{aligned}
\omega_t(F) = \mu((E+t)\cap (F+t)) =& {\mathcal L}^1(((E\cap F)+t)\cap\Omega) \\
 \le&  {\mathcal L}^1((E\cap F)+t) = {\mathcal L}^1((E\cap F)) = \mu (E\cap F) =0.
\end{aligned}
$$
However, $K_{\mu} = [0,2]$ since $A, B$ are dense in $[0,1]$. If we take $E = [0,1]$ and $t=1$, then $\mu(B) = 0$, but
$$
\omega_1(B) = \mu((E\cap B)+1) = \mu(B+1) = {\mathcal L}^1(B)>0.
$$
 Thus, $\mu$  is translationally singular on the closed support of $\mu$.
\end{proof}

\medskip

\noindent{\bf Acknowledgement.} The authors would like thank Professor Nir Lev for suggesting a simpler proof of Theorem \ref{t-3-3} and providing Example \ref{example5} soon after the first version of the paper is released.

\end{document}